\documentclass[12pt,twoside,final]{amsart}
\usepackage[draft=false]{hyperref}
\usepackage[psamsfonts]{amssymb}
\usepackage{times,a4wide}

\usepackage[alphabetic,backrefs]{amsrefs}

\parskip=6pt

\theoremstyle{plain}
\newtheorem*{theorem*}{Theorem}

\newtheorem{theorem}{Theorem}[section]
\newtheorem{proposition}[theorem]{Proposition}
\newtheorem*{maintheorem*}{Main Theorem}
\newtheorem*{proposition*}{Proposition}

\newtheorem*{corollary*}{Corollary}
\newtheorem{lemma}[theorem]{Lemma}
\newtheorem*{lemma*}{Lemma}

\theoremstyle{definition}

\newtheorem*{remark*}{Remark}
\newtheorem*{remarks*}{Remarks}
\newtheorem*{conjecture*}{Conjecture}
\theoremstyle{definition}
\newtheorem{definition}[theorem]{Definition} 

\allowdisplaybreaks

\usepackage[active]{srcltx}
\usepackage{tikz}
\usepackage{pgfplots}
\usepackage{float}
\RequirePackage{pgfplots}
\usetikzlibrary{shadows}
\pgfplotsset{compat=1.13} 

\newcommand{\D}{\mathbb{D}}
\newcommand{\C}{\mathbb{C}}

\newcommand{\N}{\mathbb{N}}
\newcommand{\R}{\mathbb{R}}
\newcommand{\E}{\mathbb{E}}
\newcommand{\T}{\mathbb{T}}
\renewcommand{\P}{\mathbb{P}}

\newcommand{\Var}{\operatorname{Var}}

\newcommand{\beqa}{\begin{eqnarray*}}
\newcommand{\eeqa}{\end{eqnarray*}}
\newcommand{\bqa}{\begin{eqnarray*}}
\newcommand{\eqa}{\end{eqnarray*}}

\title[Poisson processes in the disk] 
{Inhomogeneous Poisson processes in the disk \\
and interpolation} 

\author[A. Hartmann, X. Massaneda]{Andreas Hartmann,
Xavier Massaneda}

\address{Univ. Bordeaux, CNRS, Bordeaux INP, IMB, UMR 5251, F-33400, Talence, France}

\address{Departament de Matem\`atiques i Inform\`atica,
Universitat  de Barcelona, Gran Via 585, 08007-Bar\-ce\-lo\-na, Catalonia}

\thanks{The first author partially supported by the project REPKA (ANR-18-CE40-0035). Second author partially supported by the Generalitat de Catalunya (grant 2017 SGR 359) and the spanish Ministerio de Ciencia e Innovaci\'on (project PID2021-123405NB-I00).}

\date{\today}

\keywords{Poisson point process, Carleson measure, separation, interpolation, spaces of holomorphic functions}

\subjclass[2010]{30E05, 30H10, 30H25, 30H30, 60G55}

\begin{document}

\begin{abstract} 
We investigate different geometrical properties of the inhomogeneous Poisson point process $\Lambda_\mu$ associated to a positive, locally finite, $\sigma$-finite measure $\mu$ on the unit disk. In particular, we characterize
the processes $\Lambda_\mu$ such that almost surely: 1) $\Lambda_\mu$ is a Carleson-Newman sequence; 2) $\Lambda_\mu$ is the union of a given number $M$ of separated sequences. We use these results to discuss the measures $\mu$ such that the associated process $\Lambda_\mu$ is almost surely an interpolating sequence for the Hardy, Bloch or weighted Dirichlet spaces.
\end{abstract}

\maketitle

\section{Introduction and main results}
Important notions in spaces of analytic functions include zero-sets,  Carleson measures, interpolation, sampling, frames, etc.  Such properties have been studied for many well-known spaces of analytic functions in a deterministic setting. A canonical example is the Hardy space, where all these properties are well established, see \cite{Ga}. 
In other spaces such properties admit theoretical characterizations which are not checkable in general (e.g.\ interpolation
in Dirichlet spaces), see e.g. \cite{S} for a general reference. 
There also exist situations where a general characterization is not
available. In these circumstances it is useful to consider a random setting, which allows to see
whether certain properties are ``generic'' in a sense. The random model we are interested in here is the Poisson point process.

A \emph{Poisson point process} in the unit disk $\D$ is a random sequence $\Lambda$
defined in the following way:
for any Borel set $A\subset \D$  the counting random variable 
$N_A=\# (A\cap\Lambda) $
is well defined and
\begin{itemize}
 \item [(a)] $N_A$ is a Poisson random variable, i.e., there exists $\mu(A)\geq 0$ such that the probability distribution of $N_A$ is
\[
 \P(N_A=k)=e^{-\mu(A)} \frac{(\mu(A))^k}{k!}\ , k\geq 0.
\]
In particular $\E[N_A]=\Var[N_A]=\mu(A)$.

\item [(b)] If $A,B\subset\D$ are disjoint Borel sets then the variables $N_A$, $N_B$ are independent.
 \end{itemize}
It turns out that these two properties uniquely characterize the point process. Also, the values $\mu(A)$ define a $\sigma$-finite Borel measure on $\D$, which is called the \emph{intensity} of the process.

The Poisson process is a well-known statistical model for point distributions with no (or weak) interactions, and it has multiple applications in a great variety of fields \cite{Wi}. Because of property (b), it is clearly
not adequate to describe distributions in which each point is not statistically independent of the other points of the process. For such situations other models have been proposed (e.g. determinantal processes or zeros of Gaussian analytic functions for random sequences with repulsion, or
Cox processes for situations with positive correlations and clumping \cite{HKPV}). 

It is also possible to create a Poisson process from a given, $\sigma$-finite, locally finite, positive Borel measure $\mu$ in $\D$, in the sense that there exists a point process $\Lambda_\mu$ with intensity $\mu$, i.e, whose counting functions satisfy properties (a) and (b) above. This is a well-known, non-trivial fact that can be found, for example, in \cite{La-Pe}*{Theorem 3.6}. Such a Poisson process $\Lambda_\mu$ is sometimes called \emph{inhomogeneous}, or non-stationary.

In this paper, given a positive Borel measure $\mu$ on $\D$, we study  elementary geometric properties of the inhomogeneous Poisson process of intensity $\mu$, specifically in relation to conditions used to describe interpolating sequences for various spaces of analytic functions in $\D$. We shall always assume that $\mu(\D)=+\infty$, since otherwise $\Lambda_\mu$ would be finite almost surely.

The probabilistic point of view
has already been explored before in connection with interpolation. Here we mention Cochran \cite{Coc} and Rudowicz \cite{Ru} who considered the probabilistic model $\Lambda=\{r_n e^{i\theta_n}\}_n$ in which the radii $r_n\subset(0,1)$ are fixed a priori and the arguments $\theta_n$ are chosen uniformly and independently in $[0,2\pi]$ (a so-called Steinhaus sequence). For this model they established a zero-one condition on $\{r_n\}_n$ so that the resulting random sequence is almost surely interpolating for the Hardy spaces. 
In \cite{CHKW} similar results, for the same probabilistic model, were proven for the scale of weighted Dirichlet spaces between the Hardy space and the classical Dirichlet space. See also \cite{DWW} for related 
results in the unit ball and the polydisk.

We express our results in terms of a dyadic discretization of $\mu$.
Consider first the dyadic annuli
\[
 A_n=\{z\in \D:2^{-(n+1)}< 1-|z|\leq 2^{-n}\}, \quad n\geq 0.
\]
Each $A_n$ can be split into $2^n$ boxes of the same size $2^{-n}$: 
\[
 T_{n,k}=\bigl\{z=re^{it}\in A_n: \frac{k}{2^n}\le \frac t{2\pi}<\frac{k+1}{2^n}\bigr\},\quad k=0,1,\ldots,2^n-1.
\]
These boxes can be viewed as the top halves of the Carleson windows
\[
Q(I_{n,k})=\bigl\{z=re^{i\theta}\in \D : r>1-2^{-n}, \, e^{i\theta}\in I_{n,k}\bigr\}
\]
associated to the dyadic intervals
\begin{equation}\label{dy-int}
I_{n,k}=\bigl\{ e^{it}\in\T : \frac{k}{2^n}\le \frac t{2\pi}<\frac{k+1}{2^n}\bigr\}\ ,\quad  n\geq 0\ ,\, k=0,1,\ldots, 2^{n}-1.
\end{equation}

\begin{figure}[H]
\centering
\begin{tikzpicture}[scale=0.6]
\centering
\draw [ultra thick] (-4,0) -- (4,0);
\path [thin, blue,fill=blue!7] (-4,4)--(4,4)--(4,8)--(-4,8)--(-4,4);
\draw [thin] (-5,0) -- (-4,0);
\draw [thin] (4,0) -- (5,0);
\draw [thin] (-4,8) -- (4,8);
\draw [thin] (-4,4) -- (4,4);
\draw [thin] (-4,2) -- (4,2);
\draw [thin, dashed] (-4,1) -- (4,1);
\draw [thin, dashed] (-3,0) -- (-3,1);
\draw [thin, dashed] (-1,0) -- (-1,1);
\draw [thin, dashed] (3,0) -- (3,1);
\draw [thin, dashed] (1,0) -- (1,1);
\draw [thin] (-4,0) -- (-4,8);
\draw [thin] (4,0) -- (4,8);
\draw [thin] (0,0) -- (0,4);
\draw [thin] (-2,0) -- (-2,2);
\draw [thin] (2,0) -- (2,2);

\node [below] at (0,0) {$I_{n,k}$};
\node [right] at (4.2,4) {$Q(I_{n,k})$};
\node [above, blue] at (0,5.5) {$T_{n,k}$};

\end{tikzpicture}

\caption{Carleson window $Q(I_{n,k})$ associated to the dyadic interval $I_{n,k}$ and its top half $T_{n,k}$.}
\end{figure}
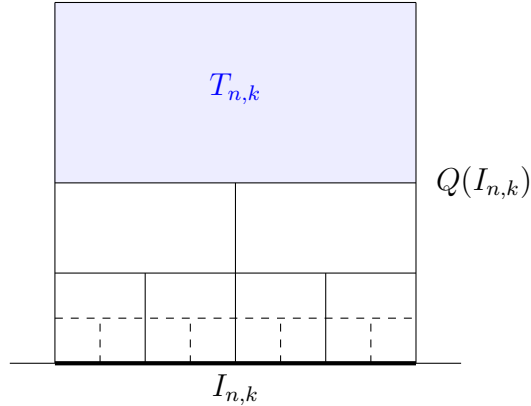

Denote $X_{n,k}=N_{T_{n,k}}$, which by hypothesis is a Poisson random variable of parameter
\[
 \mu_{n,k}:=\E[X_{n,k}]=\Var [X_{n,k}]= \mu(T_{n,k}).
\]
In these terms, the assumption $\mu(\D)=+\infty$ is just
\[
\mu(\D)=\sum_{n\in\N} \sum_{k=0}^{2^n-1} \mu_{n.k}=\sum_{n,k}\mu_{n,k}=+\infty.
\]

A first geometric property on random sequences  we are interested in is separation. For this, we recall that the pseudo-hyperbolic distance in $\D$ is given by
\[
 \rho(z,w)=\left|\frac{z-w}{1-\bar w z}\right|\ \quad z,w\in\D.
\]

\begin{definition}
 A sequence $\Lambda=\{\lambda_k\}_{k\geq 1}\subset \D$ is \emph{separated} if there exists $\delta>0$ such
that 
\[
 \rho(\lambda_k,\lambda_l)\ge\delta,\quad k\neq l.
\]
When we need to specify the separation constant we say that $\Lambda$ is $\delta$-separated.
\end{definition}

We are now in a position to state our result characterizing those $\Lambda_\mu$ which can (almost surely) be expressed as finite unions of separated sequences.

\begin{theorem}\label{thm:separation}
 Let $\Lambda_\mu$ be the Poisson process associated to a positive, 
$\sigma$-finite, locally finite measure $\mu$ and let $M\geq 1$ be an integer. Then
 \[
  \P\bigl(\Lambda_\mu\ \textrm{union of $M$ separated sequences}\bigr)=
  \begin{cases}
   1\quad \textrm{if}\quad \displaystyle\sum\limits_{n,k}\mu_{n,k}^{M+1}<\infty \\
   0\quad \textrm{if}\quad \displaystyle\sum\limits_{n,k}\mu_{n,k}^{M+1}=\infty.
  \end{cases}
 \]
 In particular,
 \[
  \P\bigl(\Lambda_\mu\ \textrm{separated}\bigr)=
  \begin{cases}
   1\quad \textrm{if}\quad \displaystyle\sum\limits_{n,k}\mu_{n,k}^{2}<\infty \\
   0\quad \textrm{if}\quad \displaystyle\sum\limits_{n,k}\mu_{n,k}^{2}=\infty.
  \end{cases} 
 \]
\end{theorem}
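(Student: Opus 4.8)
The plan is to translate the combinatorial statement into a metric one and then estimate the relevant random quantities by first- and second-moment methods. The starting point is a geometric equivalence: for a locally finite $\Lambda\subset\D$ and an integer $M\ge 1$, $\Lambda$ is a union of $M$ separated sequences if and only if there is $r>0$ such that every pseudohyperbolic disk $D(z,r)=\{w\in\D:\rho(z,w)<r\}$ contains at most $M$ points of $\Lambda$. Indeed, if the latter holds then in the graph on $\Lambda$ joining points at pseudohyperbolic distance $<r$ every vertex has degree at most $M-1$, so the graph is $M$-colorable and each color class is $r$-separated; conversely, if $\Lambda=\bigcup_{i=1}^M\Lambda_i$ with each $\Lambda_i$ being $s$-separated, then by the triangle inequality for $\rho$ any disk of radius $s/2$ contains at most one point of each $\Lambda_i$, hence at most $M$ in all. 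The useful consequence is the negation: $\Lambda$ is \emph{not} a union of $M$ separated sequences precisely when it contains $(M+1)$-point subsets of arbitrarily small diameter.

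For the convergent case $\sum_{n,k}\mu_{n,k}^{M+1}<\infty$ I would bound the expected number of such clusters at the fixed box scale $r_0$ (the common pseudohyperbolic diameter of the boxes $T_{n,k}$) by the multivariate Mecke formula. This expectation is comparable to $\int_\D \mu(D(z,r_0))^M\,d\mu(z)$, and since $D(z,r_0)$ is contained in $T_{n,k}$ together with a bounded number of adjacent boxes when $z\in T_{n,k}$, a power-mean inequality (absorbing the bounded overlap of neighbors) gives
\[
\int_\D \mu(D(z,r_0))^M\,d\mu(z)\ \lesssim\ \sum_{n,k}\mu_{n,k}^{M+1}\ <\ \infty .
\]
Hence almost surely only finitely many $(M+1)$-point subsets have diameter $\le r_0$; letting $d_*>0$ be the least of their diameters, no disk of radius $d_*/3$ can contain $M+1$ points, so by the equivalence $\Lambda_\mu$ is a union of $M$ separated sequences almost surely.

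The divergent case is the delicate one, and this is where I expect the main obstacle. The naive events ``$T_{n,k}$ contains at least $M+1$ points'' do \emph{not} help: those $M+1$ points may sit at mutual distance comparable to $r_0$, which is harmless for separation, so one must manufacture clusters at \emph{arbitrarily small} scales. To do so I would fix a scale $s$ and, inside each $T_{n,k}$, subdivide into $\asymp (r_0/s)^2$ sub-boxes of pseudohyperbolic size $s$; by the pigeonhole principle one of them, $S_{n,k}$, carries mass $\mu(S_{n,k})\ge c\,s^2\,\mu_{n,k}$. The events $\{N_{S_{n,k}}\ge M+1\}$ are independent over $(n,k)$ because the $S_{n,k}\subset T_{n,k}$ are disjoint, and each has probability $\gtrsim \min\!\bigl(1,(c\,s^2\mu_{n,k})^{M+1}\bigr)$. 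Since $s$ is fixed and $\sum_{n,k}\mu_{n,k}^{M+1}=\infty$, these probabilities sum to infinity, so by the second Borel--Cantelli lemma infinitely many $S_{n,k}$ carry at least $M+1$ points almost surely; each such occurrence is an $(M+1)$-cluster of diameter $\lesssim s$. Running this over a sequence $s=s_m\to 0$ and intersecting the countably many almost-sure events produces clusters of arbitrarily small diameter, so $\Lambda_\mu$ is almost surely not a union of $M$ separated sequences.

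I would finally note that the dichotomy is consistent with a soft zero--one law: membership in the class ``union of $M$ separated sequences'' is unaffected by the points of $\Lambda_\mu$ lying in any fixed compact subset of $\D$, hence is a tail event for the independent box contributions, and Kolmogorov's law forces the probability to be $0$ or $1$ a priori; the two moment computations merely decide which. The stated separation criterion is the case $M=1$, with $\sum_{n,k}\mu_{n,k}^2$ governing the expected number of close pairs through the Campbell (Mecke) formula.
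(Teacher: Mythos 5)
Your argument is correct, and it splits naturally into a half that matches the paper and a half that does not. The divergent direction is essentially the paper's proof: the authors also subdivide each $T_{n,k}$ into sub-boxes at a fixed small scale $2^{-l_0}$, use that the resulting counting variables are independent, estimate $\P(N\ge M+1)\simeq \lambda^{M+1}/(M+1)!$ for small parameters, and invoke the second Borel--Cantelli lemma; where you pigeonhole a single heavy sub-box $S_{n,k}$ with $\mu(S_{n,k})\ge c s^2\mu_{n,k}$, they keep all $2^{2l_0}$ sub-boxes and use the power-mean inequality $\mu_{n,k}^{M+1}\le 2^{2l_0(M+1)}\sum_j\mu_{n,k,l_0,j}^{M+1}$ --- the two devices are interchangeable. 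The convergent direction is where you genuinely differ. The paper applies the first Borel--Cantelli lemma to the events $\{X_{n,k}>M\}$ via the Taylor expansion of the Poisson tail, and then must rule out close points sitting in \emph{adjacent} boxes; it does so by rerunning the argument on a shifted dyadic grid $\tilde T_{n,k}$ before splitting the at most $M$ points per box by hand. You instead bound, via the multivariate Mecke formula, the expected number of $(M+1)$-point clusters of pseudohyperbolic diameter at most a fixed $r_0$ by $\int_{\D}\mu(D(z,r_0))^M\,d\mu(z)\lesssim\sum_{n,k}\mu_{n,k}^{M+1}$ (bounded overlap plus Young's inequality), and then convert the a.s. statement ``every sufficiently small disk contains at most $M$ points'' into the desired splitting by greedy $M$-colouring of the proximity graph. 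Your route buys a cleaner treatment of the adjacency issue (no second grid) and makes the equivalence with the one-disk condition explicit; the paper's route is more elementary, needing only one-dimensional Poisson tail estimates and no Campbell--Mecke machinery. Two points worth making explicit if you write this up: the colouring step uses that a countable graph of maximum degree $M-1$ is $M$-colourable (true, by greedy colouring along an enumeration), and the clusters you count must consist of distinct points, which holds a.s. only when $\mu$ is non-atomic --- the same harmless normalization the paper adopts in its Remark 4. Your closing zero--one law observation is also sound, since adding or deleting finitely many points does not change whether a locally finite set is a union of $M$ separated sequences.
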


The characterization of a.s. separated sequences was first obtained, with a different proof, in \cite{Ap}*{Teorema 3.2.1}.

Our second result deals with so-called $\alpha$-Carleson sequences. 
Given any arc $I\subset \T=\partial \D$ let $|I|$ denote its normalized length and consider the associated Carleson window
\[
 Q(I)=\bigl\{z=re^{i\theta}\in \D : r>1-|I|, \, e^{i\theta}\in I\bigr\}.
\]

\begin{definition}
 Let $\alpha\in (0, 1]$. The sequence $\Lambda$ satisfies the $\alpha$-\emph{Carleson} condition if there exists $C>0$ such that for all arcs $I\subset\T$
 \[
  \sum_{\lambda\in Q(I)} (1-|\lambda|)^\alpha \leq C |I|^\alpha.
 \]
Such sequences will also be called $\alpha$-Carleson sequences.
\end{definition}

Observe that to check the $\alpha$-Carleson condition it is enough to test on the dyadic intervals $I_{n,k}$ given in \eqref{dy-int}.

The sequences $\Lambda$ satisfying the  $1$-Carleson condition are by far the most studied, because of their r\^ole in the famous
characterization of the interpolating sequences for the algebra $H^\infty$ of bounded holomorphic functions, given 
by L. Carleson \cite{Ca} (see Section~\ref{int_h}). They are sometimes found in the literature under the name of {\it Carleson-Newman} sequences. 

The $\alpha$-Carleson property above is a special case of a more general condition: a finite, positive Borel measure $\sigma$ on $\D$
is a Carleson-measure of order $\alpha\in (0,1]$ if 
$\sigma(Q(I))\le C|I|^\alpha$ for some $C>0$ and all intervals $I$.  As shown by L. Carleson (see e.g. \cite{Ga}), Carleson measures (of order $1$) are precisely those for which the embedding $H^2\subset L^2(\D,\sigma)$ holds; here $H^2$ is the classical Hardy space (see the definition in Subsection \ref{HardyBergman} below). Carleson measures of order $\alpha<1$ have been used, for example, in providing sufficient conditions for solvability of the $\bar\partial_b$-equation in $L^p$, $L^{p,\infty}$ and in Lipschitz spaces of the boundary of strictly pseudoconvex domains \cite{Am-Bo}.

\begin{theorem}\label{thm:Carleson}
  Let $\Lambda_\mu$ be the Poisson process associated to a positive, 
$\sigma$-finite, locally finite measure $\mu$. Then
  \begin{itemize}
   \item [(a)]
   \[
   \P\bigl(\Lambda_\mu\ \textrm{is a 1-Carleson sequence}\bigr) =
   \begin{cases}
   1\quad \textrm{if there exists $\gamma>1$ such that}\quad \displaystyle\sum\limits_{n,k}\mu_{n,k}^{\gamma}<\infty \\
   0\quad \textrm{if for all $\gamma>1$}\quad \displaystyle\sum\limits_{n,k}\mu_{n,k}^{\gamma}=\infty.
  \end{cases}
   \]
  \end{itemize}
  
  \item [(b)] Let $\alpha\in (0,1)$. If there exists $1<\gamma<\frac 1{1-\alpha}$ such that $ \sum\limits_{n,k} \mu_{n,k}^{\gamma}<+\infty$, then
  \[
 \P\bigl(\Lambda_\mu\  \textrm{is $\alpha$-Carleson}\bigr)=1 
\]

\item [(c)] 
There exists a positive, 
$\sigma$-finite, locally finite measure $\mu$ such that $ \sum\limits_{n,k} \mu_{n,k}^{1/(1-\alpha)}<+\infty$ and
\[
\P\bigl(\Lambda_\mu\  \textrm{is $\alpha$-Carleson}\bigr)=0.
\]

\item [(d)] For every $\gamma>1$ there exists a positive, 
$\sigma$-finite, locally finite measure $\mu$ such that $ \sum\limits_{n,k} \mu_{n,k}^{\gamma}=+\infty$ but 
\[
\P\bigl(\Lambda_\mu\  \textrm{is $\alpha$-Carleson}\bigr)=1
\]
for all $\alpha\in (0,1)$.
\end{theorem}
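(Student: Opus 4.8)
The plan is to reduce the $\alpha$-Carleson condition, which (as noted after the definition) need only be tested on the dyadic windows, to a condition on occupancy variables. Write $Y_{n,k}^{(m)}$ for the number of points of $\Lambda_\mu$ lying in $Q(I_{n,k})$ at generation $m\ge n$; this is a Poisson variable of mean $\nu_{n,k}^{(m)}=\sum_{j}\mu_{m,j}$, the sum running over the $2^{m-n}$ boxes $T_{m,j}\subset Q(I_{n,k})$, and distinct generations and boxes give independent variables. Since $1-|\lambda|\asymp 2^{-m}$ on $T_{m,j}$ and $|I_{n,k}|^\alpha=2^{-n\alpha}$, the $\alpha$-Carleson inequality for $Q(I_{n,k})$ is equivalent, up to a fixed constant, to $W_{n,k}^{(\alpha)}:=\sum_{\ell\ge 0}2^{-\ell\alpha}Y_{n,k}^{(n+\ell)}\le C$, so that $\Lambda_\mu$ is a.s. $\alpha$-Carleson iff $\sup_{n,k}W_{n,k}^{(\alpha)}<\infty$ a.s. The mean orients everything: with $\Sigma=\sum_{n,k}\mu_{n,k}^\gamma$, Hölder gives $\nu_{n,k}^{(n+\ell)}\le\bar\nu_\ell:=2^{\ell(1-1/\gamma)}\Sigma^{1/\gamma}$, whence $\E[W_{n,k}^{(\alpha)}]\le\Sigma^{1/\gamma}\sum_\ell 2^{\ell(1-1/\gamma-\alpha)}$, uniformly bounded exactly when $\gamma<1/(1-\alpha)$ (and, for $\alpha=1$, for every $\gamma>1$). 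This pinpoints the thresholds in (a) and (b).

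For the positive halves of (a) and (b) I would run a first Borel--Cantelli argument: it suffices to find $C_0$ with $\sum_{n,k}\P(W_{n,k}^{(\alpha)}>C_0)<\infty$. Choose per-generation thresholds $K_\ell\asymp 2^{\ell\alpha}/(\ell+1)^2$, normalized so that $\sum_\ell 2^{-\ell\alpha}K_\ell\le C_0$ and $K_\ell\ge\gamma$ for all $\ell$; then $\{W_{n,k}^{(\alpha)}>C_0\}\subset\bigcup_\ell\{Y_{n,k}^{(n+\ell)}\ge K_\ell\}$, and the Poisson Chernoff bound gives $\P(Y_{n,k}^{(n+\ell)}\ge K_\ell)\le(e\,\nu_{n,k}^{(n+\ell)}/K_\ell)^{K_\ell}$. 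The decisive manoeuvre is to peel off exactly a $\gamma$-th power: writing $\nu^{K_\ell}=\nu^\gamma\,\nu^{K_\ell-\gamma}\le\nu^\gamma\bar\nu_\ell^{\,K_\ell-\gamma}$ and summing over the windows at generation $n$ via the power-mean inequality $\sum_k(\nu_{n,k}^{(n+\ell)})^\gamma\le 2^{\ell(\gamma-1)}\sum_j\mu_{n+\ell,j}^\gamma$, one sees that the combinatorial factor $2^{\ell(\gamma-1)}$ is cancelled precisely by $\bar\nu_\ell^{-\gamma}=2^{-\ell(\gamma-1)}\Sigma^{-1}$, leaving $\sum_{n,k}\P(W_{n,k}^{(\alpha)}>C_0)\lesssim\bigl(\sum_\ell(e\bar\nu_\ell/K_\ell)^{K_\ell}\bigr)\,\Sigma$. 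The series in $\ell$ converges because $\bar\nu_\ell/K_\ell\to 0$, which is exactly where $\gamma<1/(1-\alpha)$ is used. I expect this tail estimate --- verifying the cancellation and separately disposing of the finitely many small generations where $K_\ell$ is saturated at $\lceil\gamma\rceil$ --- to be the main technical obstacle; the rest is bookkeeping.

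The vanishing half of (a) is easier. Since the top box alone gives $\sum_{\lambda\in Q(I_{n,k})}(1-|\lambda|)\ge c\,2^{-n}X_{n,k}$, a failure of the $1$-Carleson condition follows once $\sup_{n,k}X_{n,k}=\infty$. As the boxes are disjoint the $X_{n,k}$ are independent, so the second Borel--Cantelli lemma applies: the Poisson tail bound $\P(X_{n,k}\ge K)\gtrsim\min(\mu_{n,k}^{K},1)$ gives $\sum_{n,k}\P(X_{n,k}\ge K)\gtrsim\sum_{n,k}\min(\mu_{n,k}^{K},1)=\infty$ for every integer $K\ge1$, using the standing hypothesis with $\gamma=K$ when $K\ge2$ and $\mu(\D)=\infty$ when $K=1$. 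Hence for each $K$ infinitely many $X_{n,k}$ exceed $K$, so $\sup_{n,k}X_{n,k}=\infty$ a.s. and $\Lambda_\mu$ is a.s. not $1$-Carleson.

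For the sharpness statements I would exhibit explicit intensities. For (c), take the rotation-invariant choice $\mu_{n,k}=a_n$ with $a_m=2^{-m(1-\alpha)}b_m$, where $(b_m)$ lies in $\ell^{1/(1-\alpha)}$ but not in $\ell^1$ (possible since $1/(1-\alpha)>1$); then $\sum_{n,k}\mu_{n,k}^{1/(1-\alpha)}=\sum_m b_m^{1/(1-\alpha)}<\infty$, whereas $\nu_{n,k}^{(n+\ell)}=2^\ell a_{n+\ell}$ yields $\E[W_{n,k}^{(\alpha)}]=2^{-n(1-\alpha)}\sum_{m\ge n}b_m=\infty$ on every window; being a sum of independent nonnegative variables with divergent total mean, $W_{n,k}^{(\alpha)}$ is a.s. infinite, so $\Lambda_\mu$ is a.s. not $\alpha$-Carleson. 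For (d), given $\gamma>1$ concentrate the mass on one box per generation, $\mu_{m,0}=(1+m)^{-1/\gamma}$ and $\mu_{m,j}=0$ otherwise; then $\sum_{n,k}\mu_{n,k}^\gamma=\sum_m(1+m)^{-1}=\infty$, yet $\sum_m\P(X_{m,0}\ge K)\lesssim\sum_m(1+m)^{-K/\gamma}<\infty$ for any integer $K>\gamma$, so a.s. $\sup_m X_{m,0}=:B<\infty$. Only the windows $Q(I_{n,0})$ meet the support, and there $W_{n,0}^{(\alpha)}\le B\sum_{\ell\ge 0}2^{-\ell\alpha}<\infty$ uniformly in $n$; hence $\Lambda_\mu$ is a.s. $\alpha$-Carleson for every $\alpha\in(0,1)$ simultaneously.
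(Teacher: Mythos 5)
Your argument reaches all four conclusions and is essentially sound, but it travels a genuinely different road from the paper's in the two places that matter. For the positive halves of (a) and (b), the paper controls the full weighted count $Y_{n,k}=\sum_{m\ge n}2^{-(m-n)\alpha}X_{n,k,m}$ in one stroke via its probability generating function (a product of Poisson generating functions over the generations), optimizes the resulting Markov bound in $s$ to get $\P(Y_{n,k}>A)\le (eB_{n,k}/A)^A$, and then makes $\sum_{n,k}B_{n,k}^{\gamma}$ converge by a two-exponent H\"older split $2^{-m}=2^{-m/p}2^{-m/q}$ followed by a Fubini over ancestors. You instead take a union bound over generations with layer-dependent thresholds $K_\ell\asymp 2^{\ell\alpha}/(\ell+1)^2$ and apply a per-layer Poisson Chernoff bound; your ``peeling off a $\gamma$-th power'' plus the power-mean inequality over the $2^{\ell}$ children plays exactly the role of the paper's H\"older--Fubini step, and the cancellation of $2^{\ell(\gamma-1)}$ against $\bar\nu_\ell^{-\gamma}$ does check out. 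The price of your route is the bookkeeping you already flag (validity range of the Chernoff bound, the finitely many saturated generations, $K_\ell\ge\gamma$); what it buys is a more transparent localization of where $\gamma<1/(1-\alpha)$ enters. For the divergence half of (a) the paper routes through Theorem 1 (failure of being a union of $M$ separated sequences for every $M$), whereas you apply the second Borel--Cantelli lemma directly to the box counts $X_{n,k}$ to get $\sup_{n,k}X_{n,k}=\infty$; this is simpler and equally valid, provided you add the one-line case split showing $\sum_{n,k}\min(\mu_{n,k}^K,1)=\infty$ follows from $\sum_{n,k}\mu_{n,k}^K=\infty$ (separate the boxes with $\mu_{n,k}>1$). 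For (c) your family contains the paper's example $\mu_{n,k}\simeq 2^{-n(1-\alpha)}/n$, and for (d) you use the same Stolz-angle measure but conclude more elementarily, via a.s.\ boundedness of the occupancies $X_{m,0}$, rather than rerunning the generating-function machinery.

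One statement in your (c) should be repaired: it is not true in general that a sum of independent nonnegative random variables with divergent total mean is a.s.\ infinite (take $Z_\ell$ with $\P(Z_\ell=4^{\ell})=2^{-\ell}$: the total mean diverges but almost surely only finitely many terms are nonzero). The correct criterion is that $\sum_\ell Z_\ell=\infty$ a.s.\ if and only if $\sum_\ell\E[\min(Z_\ell,1)]=\infty$, and this does hold for your $W_{n,k}^{(\alpha)}$ because $2^{-\ell\alpha}\nu_{n,k}^{(n+\ell)}=2^{-n(1-\alpha)}b_{n+\ell}\to 0$, so each term $2^{-\ell\alpha}Y_{n,k}^{(n+\ell)}$ is below $1$ with high probability and $\E[\min(2^{-\ell\alpha}Y_{n,k}^{(n+\ell)},1)]\gtrsim 2^{-n(1-\alpha)}b_{n+\ell}$, whose sum over $\ell$ diverges. (The paper's own phrase ``the expected weight of any single Carleson window is infinite'' elides the same point.) With that lemma inserted, your proof is complete.
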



\begin{remarks*} 1) The first statement in part (a) is connected with the first part of the statement in Theorem~\ref{thm:separation}, since it is a  well-known fact that every 1-Carleson (or Carleson-Newman) sequence can be split into a finite number of separated sequences, each of which being of course 1-Carleson\cite{McDS}*{Lemma 21} (obviously a finite number of arbitrary separated sequences may not be Carleson-Newman). However Theorem \ref{thm:Carleson}(a) does not give a precise information on the number of separated sequences involved. It is also mentionable that the condition for a.s.\ separation from Theorem \ref{thm:separation} implies automatically the Carleson condition (picking $\gamma=2>1$). This is perhaps more surprising and may be explained by the nature of the process: the independence of the different points allows for big fluctuations, so the probability of finding pairs of points arbitrarily close is quite big unless the number of points in the process is restricted severely (up to $ \sum_{n,k} \mu_{n,k}^{2}<\infty$). 

2) It is interesting to point out that for the inhomogeneous Poisson process we have a characterization of $1$-Carleson sequences, while in the a priori simpler random model with fixed radii and random arguments there is only a sufficient -- still optimal -- condition (see \cite{CHKW}*{Theorem 1.4}).

3) 
In the case $\alpha\in (0,1)$ the results are less precise than when $\alpha=1$. The value $1/(1-\alpha)$ turns out to be an optimal breakpoint, but nothing specific can be said beyond this value without additional conditions on the distribution of $\mu$. The example given in (c) is part of a certain parameter dependent scale of measures which will be discussed in Section \ref{Examples}, and for which the $\alpha$-Carleson condition is characterized in terms of the parameter. 

4) Our conditions, both here and in Theorem~\ref{thm:separation}, are expressed in terms of $\mu_{n,k}$, thus redistributing continuously $\mu$ on $T_{n,k}$ if necessary, we can always assume that $\mu$ is absolutely continuous with respect to the Lebesgue measure.
\end{remarks*}


The structure of the paper is as follows. In Section~\ref{theorems} we prove the main Theorems~\ref{thm:separation} and ~\ref{thm:Carleson}. Section~\ref{int_h} deals with the consequences of these results in the study of interpolating sequences for various spaces of holomorphic functions. In particular, we find precise conditions so that a Poisson process $\Lambda_\mu$ is almost surely an interpolating sequence for the Hardy spaces $H^p$, $0<p\leq\infty$, the Bloch space $\mathcal B$, or the Dirichlet spaces $\mathcal D_\alpha$, $\alpha\in (1/2,1)$. A final section is devoted to provide examples of Poisson processes associated to some simple measures and to give integral conditions (non-discrete) on $\mu$ which are in some cases equivalent to the discrete versions used in the statements.

We finish this introduction recalling the Borel-Cantelli lemma, which is a central tool in this paper.  We refer to \cite{Bil} for a general source on probability theory. Given a sequence of events $A_k$ let $\limsup A_k=\{\omega:\omega\in A_k$ for infinitely many $k\}$. 

\begin{lemma}
Let $(A_k)_k$ be a sequence of events in a probability space. Then
\begin{enumerate} 
\item If $\sum \P(A_k)<\infty$, then $\P(\limsup A_k)=0$,
\item If the events $A_k$ are independent and $\sum \P(A_k)=\infty$, then $\P(\limsup A_k)=1$.
\end{enumerate}
\end{lemma}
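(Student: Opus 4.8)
The plan is to treat the two statements separately, relying throughout on the set-theoretic identity
\[
\limsup_k A_k = \bigcap_{n\geq 1}\bigcup_{k\geq n} A_k,
\]
which expresses precisely the event that $\omega$ belongs to infinitely many of the $A_k$. Both halves then reduce to elementary manipulations of this representation, with the independence hypothesis intervening only in the second.

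For the first part I would note that $\limsup_k A_k \subset \bigcup_{k\geq n} A_k$ for every fixed $n$, so that countable subadditivity of $\P$ gives
\[
\P(\limsup_k A_k) \leq \P\Bigl(\bigcup_{k\geq n} A_k\Bigr) \leq \sum_{k\geq n} \P(A_k).
\]
Since the series $\sum_k \P(A_k)$ converges by hypothesis, its tail $\sum_{k\geq n}\P(A_k)$ tends to $0$ as $n\to\infty$; letting $n\to\infty$ in the displayed bound forces $\P(\limsup_k A_k)=0$. This half uses only subadditivity and the convergence of the series, and in particular no independence is needed.

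For the second part I would pass to the complementary event. Taking complements in the identity above yields
\[
(\limsup_k A_k)^c = \bigcup_{n\geq 1}\bigcap_{k\geq n} A_k^c,
\]
so by subadditivity it is enough to prove $\P\bigl(\bigcap_{k\geq n} A_k^c\bigr)=0$ for each fixed $n$. This is where independence enters: for any $N>n$ the events $A_n^c,\dots,A_N^c$ are independent, so
\[
\P\Bigl(\bigcap_{k=n}^{N} A_k^c\Bigr) = \prod_{k=n}^{N}\bigl(1-\P(A_k)\bigr) \leq \exp\Bigl(-\sum_{k=n}^{N}\P(A_k)\Bigr),
\]
where I invoke the elementary inequality $1-x\leq e^{-x}$. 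Because $\sum_k\P(A_k)=\infty$, the exponent tends to $-\infty$ as $N\to\infty$; by continuity of measure from above this gives $\P\bigl(\bigcap_{k\geq n}A_k^c\bigr)=0$, and summing over $n$ yields $\P((\limsup_k A_k)^c)=0$, that is, $\P(\limsup_k A_k)=1$.

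The computations are entirely standard, so there is no genuine obstacle here; the only points deserving a moment's care are the passage to the complement in the second part and the fact that independence is available only for finitely many events at a time, which is why one factors $\P\bigl(\bigcap_{k=n}^{N}A_k^c\bigr)$ for finite $N$ before sending $N\to\infty$. The inequality $1-x\le e^{-x}$ is the one ingredient that converts the divergent series into a vanishing product.
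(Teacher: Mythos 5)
Your proof is correct and complete: the convergent half uses only the inclusion $\limsup_k A_k\subset\bigcup_{k\ge n}A_k$ and the vanishing of the tail sum, and the divergent half correctly factors the finite intersections $\bigcap_{k=n}^{N}A_k^c$ by independence, applies $1-x\le e^{-x}$, and passes to the limit by continuity from above. Note that the paper does not prove this lemma at all -- it is recalled as a standard tool with a reference to Billingsley -- and your argument is exactly the classical textbook proof; the only step you use implicitly is that independence of the $A_k$ entails independence of the complements $A_k^c$, which is standard and could be mentioned in one line for completeness.
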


{\bf Acknowledgements:} The authors would like to thank Joaquim Ortega-Cerd\`a for suggesting the consideration of Poisson processes and for helpful discussions.


\section{Proof of Theorems~\ref{thm:separation} and ~\ref{thm:Carleson}.}\label{theorems}

\subsection{Proof of Theorem~\ref{thm:separation}}
 Assume first that $\sum_{n,k}\mu_{n,k}^{M+1}<+\infty$, and define the events
\[
 A_{n,k}=\{X_{n,k}> M\}=\{X_{n,k}\ge M+1\}.
\]
Then
\[
 \P(A_{n,k})=1-\sum_{j=0}^M \P(X_{n,k}=j)=1-e^{-\mu_{n,k}} \bigl(\sum_{j=0}^M \frac{\mu_{n,k}^j}{j!}\bigr).
\]
By hypothesis $\lim\limits_n(\sup_k\mu_{n,k})= 0$, so we can use Taylor's formula
\begin{equation}\label{tay}
 1-e^{-x}(\sum_{j=0}^M\frac{x^j}{j!})=\frac{x^{M+1}}{(M+1)!}+o(x^{M+1})\qquad x\to 0
\end{equation}
to deduce that
\[
 \sum_{n,k} \P(A_{n,k})\lesssim \sum_{n,k}\frac{\mu_{n,k}^{M+1}}{(M+1)!}<+\infty.
\]
By the Borel-Cantelli lemma $X_{n,k}\leq M$ for all but at most a finite number of $T_{n,k}$. 

In principle this does not imply that $\Lambda_\mu$ can be split into $M$ separated sequences, because it 
might happen that points of two neighboring $T_{n,k}$ come arbitrarily close. This possibility is excluded by repeating
the above arguments to a new dyadic partition, made of shifted boxes $\tilde{T}_{n,k}$ having the ``lower vertices'' (those closer to $\T$) at the center of the $T_{n,k}$'s (see Figure \ref{Fig2} below); let
\[
 \tilde T_{n,k}=\Bigl\{z=re^{it}: \frac 32 2^{-(n+2)}<1-r\leq \frac 32 2^{-(n+1)}\, ;\ \frac{k+1/4}{2^n}\le \frac t{2\pi}<\frac{k+3/4}{2^n}\Bigr\}.
\]
Since each $\tilde T_{n,k}$ is included in the union of at most four $T_{m,j}$, we still have $\sum_{n,k} \tilde\mu_{n,k}^{M+1}<\infty$, and therefore, as before,  $\tilde X_{n,k}=N_{\tilde T_{n,k}}$ is at most $M$, except for maybe a finite number of indices $(n,k)$. This prevents that two adjacent $T_{n,k}$ have more than $M$ points getting arbitrarily close. In conclusion, for all but a finite number of indices $X_{n,k}\leq M$, hence the part of $\Lambda_\mu$ in these boxes can be split into $M$ separated sequences. Adding the remaining finite number of points  to any of these sequences may change the separation constant, but not the fact that they are separated.

\begin{figure}[H]
\centering
\begin{tikzpicture}[scale=0.6]
\centering
\draw [ultra thick] (-4,0) -- (4,0);
\path [thin, blue,fill=blue!7] (-4,4)--(4,4)--(4,8)--(-4,8)--(-4,4);
\path [thin, blue,fill=blue!7] (4,1)--(6,1)--(6,2)--(4,2)--(4,1);
\draw [thin, red,fill=red!7] (2,3)--(6,3)--(6,6)--(2,6)--(2,3);
\draw [thin, red,fill=red!7] (-3,3)--(-3,1.5)--(-5,1.5)--(-5,3);
\draw [thin] (-9,0) -- (-4,0);
\draw [thin] (4,0) -- (9,0);
\draw [thin] (-4,8) -- (4,8);
\draw [thin] (-4,4) -- (4,4);
\draw [thin] (-4,2) -- (4,2);
\draw [thin, dashed] (-4,1) -- (4,1);
\draw [thin, dashed] (-3,0) -- (-3,1);
\draw [thin, dashed] (-1,0) -- (-1,1);
\draw [thin, dashed] (3,0) -- (3,1);
\draw [thin, dashed] (1,0) -- (1,1);

\draw [thin] (-4,0) -- (-4,8);
\draw [thin] (-8,0) -- (-8,4);
\draw [thin] (8,0) -- (8,4);
\draw [thin] (-8,4) -- (-4,4);

\draw [thin, dashed] (-9,4) -- (-8,4);
\draw [thin, dashed] (9,4) -- (8,4);
\draw [thin, dashed] (9,8) -- (4,8);
\draw [thin, dashed] (-9,8) -- (-4,8);
\draw [thin] (8,4) -- (4,4);
\draw [thin] (4,0) -- (4,8);
\draw [thin] (0,0) -- (0,4);
\draw [thin] (-2,0) -- (-2,2);
\draw [thin] (-8,2) -- (-4,2);
\draw [thin] (8,2) -- (4,2);
\draw [thin] (2,0) -- (2,2);
\draw [thin] (-6,0) -- (-6,2);
\draw [thin] (6,0) -- (6,2);
\draw [thin, dashed] (-8,1) -- (-4,1);
\draw [thin, dashed] (8,1) -- (4,1);
\draw [thin, dashed] (-7,0) -- (-7,1);
\draw [thin, dashed] (-5,0) -- (-5,1);
\draw [thin, dashed] (5,0) -- (5,1);
\draw [thin, dashed] (7,0) -- (7,1);

\draw [thin, red] (2,3)--(-2,3)--(-2,6)--(2,6);
\draw [thin, red] (-2,3)--(-6,3)--(-6,6)--(-2,6);
\draw [thin, red] (-8,3)--(-6,3);
\draw [thin, red, dashed] (-8,3)--(-9,3);
\draw [thin, red] (-8,6)--(-6,6);
\draw [thin, red, dashed] (-8,6)--(-9,6);

\draw [thin, red] (-5,3)--(-5,1.5)--(-7,1.5)--(-7,3)--(-5,3);
\draw [thin, red] (-7,3)--(-8,3);
\draw [thin, red, dashed] (-9,3)--(-8,3);
\draw [thin, red] (-8,1.5)--(8,1.5);
\draw [thin, red, dashed] (9,3)--(8,3);
\draw [thin, red, dashed] (9,1.5)--(8,1.5);
\draw [thin, red, dashed] (-9,1.5)--(-8,1.5);
\draw [thin, red, dashed] (-9,1.5)--(-9,3);
\draw [thin, red, dashed] (9,1.5)--(9,3);
\draw [thin, red] (-1,1.5)--(-1,3);
\draw [thin, red] (1,1.5)--(1,3);
\draw [thin, red] (3,1.5)--(3,3);
\draw [thin, red] (5,1.5)--(5,3);
\draw [thin, red] (7,1.5)--(7,3);

\draw [thin, red] (8,3)--(6,3);
\draw [thin, red, dashed] (8,3)--(9,3);
\draw [thin, red] (8,6)--(6,6);
\draw [thin, red, dashed] (8,6)--(9,6);

\node [below] at (0,0) {$I_{n,k}$};
\node [above, blue] at (0,6.5) {$T_{n,k}$};
\node [ red] at (5,5) {$\tilde T_{n,k}$};

\end{tikzpicture}

\caption{Dyadic partitions: $\{T_{n,k}\}_{n,k}$ in blue, $\{\tilde T_{n,k}\}_{n,k}$ in red.}\label{Fig2}
\end{figure}
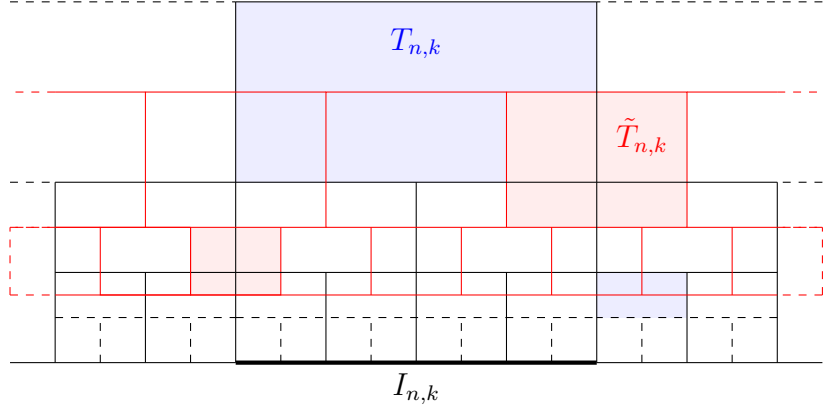

\medskip

Assume now that $\sum_{n,k}\mu_{n,k}^{M+1}=+\infty$. We shall prove that for
every $\delta_{l_0}=2^{-l_0}$, $l_0\in\N$,
\[ 
 \P\bigl(\text{$\Lambda$ union of $M$ $\delta_{l_0}$-separated sequences}\bigr)=0.
\]
Split each side of $T_{n,k}$ into $2^{l_0}$ segments of the same length. 
This defines a partition of $T_{n,k}$ in $2^{2l_0}$ small boxes of side length $2^{-n} 2^{-l_0}$, which we denote by
\[
 T_{n,k}^{l_0,j}\qquad j=1,\dots, 2^{2l_0}.
\]
Let $X_{n,k}^{l_0,j}=N_{T_{n,k}^{l_0,j}}$ denote the corresponding counting variable, which follows a Poisson law of parameter
$\mu_{n,k,l_0,j}=\mu(T_{n,k}^{l_0,j})$. 
 
It is enough to show that for any $l_0$, 
\[
 \P(X_{n,k}^{l_0,j}> M \text{ for infinitely many }n,k,j)=1.
\] 
By the second part of the Borel-Cantelli lemma, since the $X_{n,k}^{l_0,j}$ are independent, we shall be done as soon as we see that
\begin{equation}\label{BCSep}
 \sum_{n,k}\sum_{j=1}^{2^{2 l_0}} \P\bigl(X_{n,k}^{l_0,j}\ge M+1\bigr)=+\infty.
\end{equation}

For any Poisson variable $X$ of parameter $\lambda$, the probability
\[
 \P(X\ge M+1)=e^{-\lambda}\bigl(\sum_{m=M+1}^\infty \frac{\lambda^m}{m!}\bigr)=1-e^{-\lambda}\bigl(\sum_{m=0}^M \frac{\lambda^m}{m!}\bigr)
\]
increases in $\lambda$. Hence there is no restriction in assuming that $0\leq\mu_{n,k,l_0,j}\leq \mu_{n,k}\leq 1/2$ for all $n,k,j$. Then we can use Taylor's formula \eqref{tay} to deduce that
\[
 \P\bigl(X_{n,k}^{l_0,j}\ge M+1\bigr)\simeq \frac{\mu_{n,k,l_0,j}^{M+1}}{(M+1)!}.,
\]
and therefore \eqref{BCSep} is equivalent to 
\[
 \sum_{n,k}\sum_{j=1}^{2^{2l_0}} \mu_{n,k,l_0,j}^{M+1}=+\infty.
\]
That this sum in infinite is just a consequence of the hypothesis and the elementary estimate
\begin{align*}
 \mu_{n,k}^{M+1}=\Bigl(\sum_{j=1}^{2^{2l_0}} \mu_{n,k,l_0,j}\Bigr)^{M+1}
 \leq 2^{2l_0 (M+1)} \sum_{j=1}^{2^{2l_0}} \mu_{n,k,l_0,j}^{M+1}.
\end{align*}

\subsection{Proof of Theorem~\ref{thm:Carleson}}
 
(a) 
Assume first that $\sum_n \mu_{n,k}^{\gamma}<+\infty$ for some $\gamma>1$.   
It is enough to check the Carleson condition
\[
 \sum_{\lambda\in Q(I)} (1-|\lambda|)\leq C |I|
\]
on the dyadic intervals $I_{n,k}$. Let $Q_{n,k}=Q(I_{n,k})$. Decomposing the sum on the different layers $A_m$, 
it is enough to show that almost surely there exists $C>0$ such that for all $n\geq 0$, $k=0,\dots, 2^{n-1}$
\[
\sum_{\lambda\in Q_{n,k}} (1-|\lambda|)\simeq \sum_{m\geq n} \sum_{j: T_{m,j}\subset Q_{n,k}} 2^{-m} X_{m,j}\leq C 2^{-n} .
\]
This is equivalent to
\begin{equation}\label{eq:Carl-dyadic}
 \sup_{n,k}\ 2^n\sum_{m\geq n} \sum_{j: T_{m,j}\subset Q_{n,k}} 2^{-m} X_{m,j}<\infty
\end{equation}

Denote 
\[
 X_{n,m,k}=N_{Q_{n,k}\cap A_m}=\#(\Lambda\cap Q_{n,k}\cap A_m)=\sum_{j: T_{m,j}\subset Q_{n,k}} X_{m,j},
\]
which is a Poisson variable of parameter
\[
 \mu_{n,m,k}=\mu(Q_{n,k}\cap A_m)=\sum_{j: T_{m,j}\subset Q_{n,k}} \mu_{m,j}.
\]
Set
\[
 Y_{n,k}=2^n\sum_{m\ge n}2^{-m}X_{n,k,m}=\sum_{m\ge n}2^{n-m}X_{n,k,m},
\]
so that \eqref{eq:Carl-dyadic} becomes $\sup_{n,k} Y_{n,k}<+\infty$.

Let $A>0$ be a big constant to be fixed later on. Again by the Borel-Cantelli Lemma, it is enough to show that
\begin{equation}\label{est:ynk}
\sum_{n,k} \P\bigl(Y_{n,k}>A\bigr)<+\infty,
\end{equation}
since then $Y_{n,k}\leq A$ for all but maybe a finite number of $n,k$; in particular $\sup_{n,k} Y_{n,k}<\infty$.

The first step of the following reasoning is an adaptation to the Poisson process of the proof given in \cite{CHKW}*{Theorem 1.1} and which allowed to improve the result on Carleson sequences for the probabilistic model with fixed radii and random arguments. However, while in the original proof the Carleson boxes $Q_{n,k}$ are decomposed into layers $Q_{n,k}\cap A_m$ ($m\ge n$), in this new situation (as well as for (b)), Carleson boxes are decomposed into top-halves $T_{m,j}\subset Q_{n,k}$, which requires more delicate arguments to reach the convergence needed in the Borel-Cantelli lemma.

Recall that the probability generating
function of a Poisson variable $X$ of parameter $\lambda$ is $\E(s^X)=e^{\lambda(s-1)}$. 
By the independence of the different $X_{n,k,m}$, $m\ge n$, 
\[
 \E(s^{Y_{n,k}})=\prod_{m\ge n}\E((s^{2^{n-m}})^{X_{n,m,k}})
 =\prod_{m\ge n}e^{\mu_{n,m,k}(s^{2^{n-m}}-1)}.
\]
Thus for any $s>1$, by Markov's inequality
\[
 \P(Y_{n,k}>A)=\P(s^{Y_{n,k}}>s^A)
 \le \frac{1}{s^A}\E(s^{Y_{n,k}})=\frac{1}{s^A}\prod_{m\ge n}e^{\mu_{n,m,k}(s^{2^{n-m}}-1)}.
\]
Using the estimate $x(a^{1/x}-1)\le a$, for $a,x>1$, with $a=s$ and $x=2^{m-n}$,
\begin{align*}
 \log \P(Y_{n,k}>A)&\le -A\log s+\sum_{m\ge n}(s^{2^{n-m}}-1)\, \mu_{n,m,k}\\
 &\le -A\log s+\sum_{m\ge n} s 2^{n-m} \, \mu_{n,m,k}\\
 &=-A\log s+s\sum_{m\ge n} 2^{n-m}
 \sum_{j:T_{m,j}\subset Q_{n,k}}\mu_{m,j}.
\end{align*}
We want to optimize this estimate for $s>1$. Set
\[
 B_{n,k}=\sum_{m\ge n} 2^{-(m-n)}
 \sum_{j:T_{m,j}\subset Q_{n,k}}\mu_{m,j}
\]
and define
\[
 \phi (s)=-A\log s+s B_{n,k}.
\]
Let us observe first that the $B_{n,k}$ are uniformly bounded (they actually tend to 0). Indeed, let $\beta$ denote the conjugate exponent of $\gamma$ ($\frac 1{\gamma}+\frac 1{\beta}=1$). Since for $m\geq n$ there are $2^{m-n}$ boxes $T_{m,j}$ in $Q_{n,k}$, by H\"older's inequality on the sum in the index $j$ we deduce that
\begin{align*}
 B_{n,k}&\le \sum_{m\ge n}2^{-(m-n)}\Bigl(\sum_{j:T_{m,j}\subset Q_{n,k}} \mu_{m,j}^{\gamma}
 \Bigr)^{1/\gamma}2^{(m-n)/\beta}\\
 &=\sum_{m\ge n}2^{-(m-n)/\gamma}\Bigl(\sum_{j:T_{m,j}\subset Q_{n,k}} \mu_{m,j}^{\gamma}
 \Bigr)^{1/\gamma}<+\infty.
\end{align*}
Taking $A$ big enough we see that the minimum of $\phi$ is attained at $s_0=A/B_{n,k}>1$. Hence
\[
 \log \P(Y_{n,k}>A) \le \phi(s_0)=-A\log\frac{A}{B_{n,k}}+A.
\]
Therefore
\[
  \P\bigl(Y_{n,k}>A\bigr)\le \left(\frac{B_{n,k}}{A}\right)^Ae^A,
\]
and
\[
 \sum_{n,k} \P(Y_{n,k}>A)
 \le \left(\frac{e}{A}\right)^A\sum_{n,k}B_{n,k}^A.
\]
The estimate on $B_{n,k}$ obtained previously is not enough to prove that this last sum converges.
In order to obtain a better estimate take $p>1$, to be chosen later on, its conjugate exponent $q$ (i.e. $\frac 1p+\frac 1q=1$),
and apply H\"older's inequality in the following way:
\begin{align*}
 B_{n,k}&=\sum_{\substack{m\geq n\\j:T_{m,j}\subset Q_{n,k}}}  2^{-(m-n)}\mu_{m,j}
 =2^n \sum_{\substack{m\geq n\\j:T_{m,j}\subset Q_{n,k}}}  2^{-\frac mp}2^{-\frac mq}\mu_{m,j}\\
&\le 2^n\Bigl(\sum_{\substack{m\geq n\\j:T_{m,j}\subset Q_{n,k}}} 2^{-\frac{m\beta}p}\Bigr)^{1/\beta}
\times\Bigl(\sum_{\substack{m\geq n\\j:T_{m,j}\subset Q_{n,k}}} 2^{-\frac{m\gamma}q}\mu^{\gamma}_{m,j}\Bigr)^{1/\gamma}.
\end{align*}

Choose now $p$ so that $1<p<\beta$; then 
\begin{align*}
\sum_{\substack{m\geq n\\j:T_{m,j}\subset Q_{n,k}}}   2^{-\frac{m\beta}p}
 &=\sum_{m=n}^{\infty}  2^{-\frac{m\beta}p}  \ 2^{m-n}
 =2^{-n}\sum_{m=n}^{\infty}2^{-m(\frac{\beta}p-1)}
 \simeq 2^{-n}2^{-n(\frac{\beta}p-1)}=2^{-n\frac{\beta}p}.
\end{align*}
Thus, from the above estimate, 
\[
 B_{n,k}\le 2^{\frac nq}
 \Bigl(\sum_{\substack{m\geq n\\j:T_{m,j}\subset Q_{n,k}}}    2^{-\frac{m\gamma}q}\ \mu^{\gamma}_{m,j} \Bigr)^{1/\gamma}.
\]
Choosing $A=\gamma$ yields 
\[
\sum_{n,k}B_{n,k}^\gamma \le \sum_{n,k}  2^{\frac{n\gamma}q}
 \sum_{\substack{m\geq n\\j:T_{m,j}\subset Q_{n,k}}}  2^{-\frac{m\gamma}q}\ \mu^{\gamma}_{m,j} .
\]
We now apply Fubini's theorem to exchange the sums. The important observation here is that
each $T_{m,j}$ has only one ancestor at each level $n\le m$ (i.e, one $T_{n,k}$ containing
$T_{m,j}$). Hence
\begin{align*}
\sum_{n,k} B_{n,k}^{\gamma}&\le\sum_{m,j} 2^{-\frac{m\gamma}q}\ \mu^{\gamma}_{m,j}
\sum_{\substack{n\leq m\\ k : Q_{n,k} \supseteq T_{m,j}}}  2^{\frac{n\gamma}p}
 =\sum_{m,j}  2^{-\frac{m\gamma}q}\ \mu^{\gamma}_{m,j}  \sum_{n\le m} 2^{\frac {n\gamma} q}\\
 &\le 2\sum_{m,j} 2^{-\frac{m\gamma}q}   \mu^{\gamma}_{m,j}\ 2^{\frac{m\gamma}q}
 =2\sum_{m,j} \mu^{\gamma}_{m,j}.
\end{align*}
This finishes the proof of \eqref{est:ynk}, hence of this part of the theorem.

\medskip

Let us now assume that $\sum_{n,k}\mu_{n,k}^{\gamma}
=+\infty$ for every $\gamma>1$. Suppose $M\ge 1$ is an integer. Since the sum diverges for $\gamma=M+1$, Theorem~\ref{thm:separation} implies that the sequence $\Lambda_{\mu}$ is almost surely not a union of $M$ separated sequences. In particular, a.s.\ there is $\lambda_0\in\Lambda_{\mu}$ such that $D_{\lambda_0}=\{z\in \D:\rho(\lambda_0,z)<1/2\}$ contains at least $M+1$ points of $\Lambda_{\mu}$. Then, letting $I_{\lambda_0}$ be the interval centered at $\lambda_0/|\lambda_0|$ with length $1-|\lambda_0|$, we have $\sum_{\lambda\in Q(I_{\lambda_0})}(1-|\lambda|)\gtrsim M |I_{\lambda_0}|$, where the underlying constant does not depend on $M$ or $\lambda_0$. This being true for every integer $M\ge 1$, the sequence cannot be 1-Carleson.

\medskip

(b) 
Proceeding as in the first implication of (a) we see that it is enough to prove that almost surely
\begin{equation}\label{eq:carl-alpha}
\sup_{n,k} Y_{n,k}<+\infty\ ,
\end{equation}
where now
\begin{equation}\label{ynk-alpha}
 \quad Y_{n,k}=2^{n\alpha}\sum_{m\geq n} 2^{-m\alpha} \sum_{j: T_{m,j}\subset Q_{n,k}} X_{m,j}.
\end{equation}
The same estimates as in (a) based on the probability generating function yield, for $s>1$,
\[
 \log \P\bigl(Y_{n,k}\geq A\bigr)\leq \phi(s)=-A\log s+ B_{n,k},
\]
where now
\[
 B_{n,k}=\sum_{m\geq n}  2^{-(m-n)\alpha} \sum_{j: T_{m,j}\subset Q_{n,k}} \mu_{m,j}.
\]
As in (a), the hypotheses imply that $B_{n,k}$ is uniformly bounded: letting $\beta$ denote the conjugate exponent to $\gamma$ ($\frac 1{\gamma}+\frac 1{\beta}=1$) and noticing that $\alpha-1/\beta=1/\gamma-(1-\alpha)>0$,
\begin{align*}
 B_{n,k}&\leq \sum_{m\geq n}  2^{-(m-n)\alpha} \Bigl(\sum_{j: T_{m,j}\subset Q_{n,k}} \mu_{m,j}^\gamma\Bigr)^{1/\gamma} \ 2^{(m-n)/\beta} \\
 &= \sum_{m\geq n}  2^{-(m-n)(\alpha-1/\beta)}  \Bigl(\sum_{j: T_{m,j}\subset Q_{n,k}} \mu_{m,j}^\gamma\Bigr)^{1/\gamma}.
\end{align*}
Therefore, optimizing the estimate for $s>1$ exactly as we did in (a), we obtain $\P\bigl(Y_{n,k}\geq A\bigr)\lesssim B_{n,k}^A$, and we are lead to prove that for some $A>0$
\begin{equation}\label{sum-c-alpha}
 \sum_{n,k} \P\bigl(Y_{n,k}\geq A\bigr)\lesssim \sum_{n,k} B_{n,k}^A<\infty.
\end{equation}
Again, we introduce an auxiliary weight $p$ -- to be determined later -- and its conjugate exponent $q$. Split $2^{-m\alpha}=2^{-\frac{m\alpha}p}2^{-\frac{m\alpha}q}$ and use H\"older's inequality to obtain
\begin{align*}
 B_{n,k}&
\le 2^{n\alpha}\Bigl(\sum_{\substack{m\geq n\\j:T_{m,j}\subset Q_{n,k}}} 2^{-\frac{m\alpha\beta}p}\Bigr)^{1/\beta}
\times\Bigl(\sum_{\substack{m\geq n\\j:T_{m,j}\subset Q_{n,k}}} 2^{-\frac{m\alpha\gamma}q}\mu^{\gamma}_{m,j}\Bigr)^{1/\gamma}.
\end{align*}
The first sum is finite:  since by hypothesis $\alpha\beta=\frac{\alpha \gamma}{\gamma-1}>1$ there exists 
$1<p<\frac{\alpha \gamma}{\gamma-1}$ and,
\begin{align*}
 \sum_{\substack{m\geq n\\j:T_{m,j}\subset Q_{n,k}}} 2^{-\frac{m\alpha\beta}p}&=\sum_{m\geq n} 2^{-\frac{m\alpha\beta}p} 2^{m-n}
 =2^{-n} \sum_{m\geq n} 2^{-m(\frac{\alpha\beta}p-1)}\simeq  2^{-n\frac{\alpha\beta}p}.
\end{align*}
This implies that
\[
 B_{n,k}^\gamma\lesssim 2^{n\alpha\frac{\gamma}q}  \sum_{\substack{m\geq n\\j:T_{m,j}\subset Q_{n,k}}} 2^{-\frac{m\alpha\gamma}q}\mu^{\gamma}_{m,j} 
\]
and we can conlude the proof of \eqref{sum-c-alpha} as before:
\begin{align*}
 \sum_{n,k} B_{n,k}^\gamma &\lesssim\sum_{n,k} 2^{n\alpha\frac{\gamma}q}  \sum_{\substack{m\geq n\\j:T_{m,j}\subset Q_{n,k}}} 
 2^{-\frac{m\alpha\gamma}q}  \mu^{\gamma}_{m,j} =\sum_{m,j} \mu^{\gamma}_{m,j} 2^{-m\alpha\frac{\gamma}q} \sum_{\substack{n\leq m\\ k: Q_{n,k} \supseteq T_{m,j} }}   2^{n\alpha\frac{\gamma}q}\\
 &=\sum_{m,j} \mu^{\gamma}_{m,j} 2^{-m\alpha\frac{\gamma}q} \sum_{n\leq m} 2^{-n\alpha\frac{\gamma}q}\simeq \sum_{m,j} \mu^{\gamma}_{m,j} <+\infty. 
\end{align*}

\medskip

(c)  Here we  give a measure $\mu$ for which $\sum_{n,k}\mu_{n,k}^{1/(1-\alpha)}<+\infty$ but $\P(\Lambda_\mu\ \textrm{is $\alpha$-Carleson})=0$. Let  
\[
d\mu(z)= \frac{dm(z)}{(1-|z|^2)^{1+\alpha} \log\bigl(\frac e{1-|z|^2}\bigr)},
\]
which is the measure $\mu=\mu(1+\alpha,1)$ given in the family of examples of Section~\ref{Examples}.
By a simple computation (see \eqref{ex:radial})
\[
 \mu_{n,k}\simeq \frac{2^{-n(1-\alpha)}}n\qquad n\geq 1,\ k=0, \dots, 2^n-1
\]
and therefore, since $k$ ranges over $2^n$ terms, 
\begin{align*}
 \sum_{n,k}\mu_{n,k}^{1/(1-\alpha)}\simeq \sum_{n\geq 1} \frac{1}{n^{1/(1-\alpha)}}<+\infty.
\end{align*}

On the other hand, letting $Y_{n,k}$ be as in the proof of part (b) (see \eqref{ynk-alpha})
we get 
\beqa
 \E(Y_{n,k})&=&2^{n\alpha}
 \sum_{m\ge n}2^{-m\alpha}\sum_{j:T_{m,j}\subset Q_{n,k}}\mu_{m,j}
\simeq 2^{n\alpha}
 \sum_{m\ge n}2^{-m\alpha} 2^{m-n}\frac{2^{-(1-\alpha)m}}{n}\\
 &=&2^{-(1-\alpha)n}\sum_{m\ge n}\frac{1}{n}=+\infty
\eeqa
Thus the expected weight of any single Carleson window $Q_{n,k}$ is infinite and  $\Lambda_{\mu}$ cannot be $\alpha$-Carleson.

\medskip

(d) One could think of considering a divergent
series $\sum_{n,k}\mu_{n,k}^{\gamma}=+\infty$ such that $\sum_{n,k}\mu_{n,k}^{\gamma'}
<+\infty$ for every $\gamma'>\gamma$, and then apply (b), showing that $\Lambda_{\mu}$
is $\alpha$-Carleson when $\gamma'<\frac{1}{1-\alpha}$, i.e. when $\alpha>1-\frac{1}{\gamma'}=\frac{\gamma'-1}{\gamma'}$. However, this does not yield the whole range $\alpha\in (0,1)$ for a fixed measure, as required by the statement.

In order to construct an example working for all $\alpha\in (0,1)$, we pick a measure $\mu$ supported in a Stolz angle of vertex 1, i.e. let, for $n\geq 1$,
\[
 \mu_{n,k}=\begin{cases}
 \dfrac{1}{n^{1/\gamma}}&\textrm{if }k=0\\
\quad 0 &\textrm{if } k>1. 
 \end{cases}
\]
(We could equivalently take the measure $\tau(2,1/\gamma)$ given in Subsection~\ref{Examples}, Example 3).
Then
\begin{equation}\label{SumDiv}
 \sum_{n,k}\mu_{n,k}^{\gamma}=\sum_n \frac{1}{n}=\infty
\end{equation}
but for every $\gamma'>\gamma$,
\[
\sum_{n,k}\mu_{n,k}^{\gamma'}=\sum_n \frac{1}{n^{\gamma'/\gamma}}<+\infty.
\]

To prove that $\Lambda_\mu$ is almost surely $\alpha$-Carleson 
we will argue as before. Set $Y_{n,k}$ as in the proof of (b) (see \eqref{ynk-alpha}) and follow the same steps to prove that
\[
 \P(Y_{n,k}\ge A)\lesssim B_{n,k}^A,
\]
where
\[
 B_{n,k}=\sum_{m\ge n}
\sum_{j:T_{m,j}\subset Q_{n,k}} \mu_{m,j}2^{-(m-n)\alpha}.
\]
By construction $B_{n,k}=0$ for all $k>0$. On the other hand
\[
 B_{n,0}=2^{n\alpha}\sum_{m\ge n}2^{-m\alpha}\mu_{m,0}
 =2^{n\alpha}\sum_{m\ge n}\frac{2^{-m\alpha}}{m^{1/\gamma}}
\le \frac{1}{n^{1/\gamma}}.
\] 
(Observe that this last expression is independent of $\alpha$.)
Hence 
\[
 \sum_{n,k}B_{n,k}^{\gamma'}=\sum_nB_{n,0}^{\gamma'}
\le \sum_n \frac{1}{n^{\gamma'/\gamma}}<+\infty,
\]
and as in the proof of (b) the Borel-Cantelli lemma allows to conclude that $\Lambda$ is
almost surely $\alpha$-Carleson.


\section{Random interpolating sequences}\label{int_h}

In this section we discuss several consequences of Theorems~\ref{thm:separation} and ~\ref{thm:Carleson} on random interpolating sequences $\Lambda_\mu$ for various spaces of holomorphic functions in $\D$. The results are rather straightforward consequences of the aforementioned theorems and  the known conditions for such sequences. 



\subsection{Hardy (and Bergman) spaces}\label{HardyBergman}

In this section we completely characterize the measures $\mu$ for which the associated Poisson process $\Lambda_\mu$ is almost surely an interpolating sequence for the Hardy spaces.

Recall that a sequence $\Lambda=\{\lambda_n\}_{n\in\N}\subset\D$ is interpolating for
\[
 H^\infty=\bigl\{f\in H(\D) : \|f\|_\infty=\sup_{z\in D} |f(z)|<\infty\bigr\}
\]
whenever for every sequence of bounded values $\{w_n\}_{n\in\N}\subset\C$ there exists $f\in H^\infty$ such that $f(\lambda_n)=w_n$, $n\in\N$. According to a famous theorem by L. Carleson, $\Lambda$ is $H^\infty$-interpolating if and only if it is separated and 1-Carleson \cite{Ca}. This characterization extends to all Hardy spaces
\[
 H^p=\Bigl\{f\in H(\D) : \|f\|_p=\sup_{r<1}\Bigl(\int_0^{2\pi} |f(re^{it})|^p\, \frac{dt}{2\pi}\Bigr)^{1/p}<+\infty\Bigr\}\qquad 0<p<\infty,
\]
for which the interpolation problem is defined in a similar manner (the data $w_n$ to be interpolated should satisfy $\sum_n(1-|\lambda_n|^2)|w_n|^p<+\infty$, see e.g. \cite{Du}*{Chapter 9}). 

The separation condition given in Theorem~\ref{thm:separation} implies immediately that $\Lambda_\mu$ is 1-Carleson, by Theorem~\ref{thm:Carleson}, hence the following result follows.

\begin{theorem}\label{thm:Hardy}
 Let $\Lambda_\mu$ be the Poisson process associated to a positive, $\sigma$-finite, locally finite measure $\mu$. Then, for any $0<p\leq \infty$,
  \[
   \P\bigl(\Lambda_\mu\ \textrm{is $H^p$-interpolating}\bigr) =
   \begin{cases}
   1\quad &\textrm{if $\ \displaystyle\sum\limits_{n,k}\mu_{n,k}^{2}<\infty$} \\
   0\quad &\textrm{if $\ \displaystyle\sum\limits_{n,k}\mu_{n,k}^{2}=\infty$}.
  \end{cases}
   \]
\end{theorem}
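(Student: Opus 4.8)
The plan is to reduce the entire statement to Carleson's classical characterization, according to which, for every $0<p\leq\infty$, a sequence is $H^p$-interpolating if and only if it is simultaneously separated and $1$-Carleson. Both of these geometric properties have already been analyzed as almost-sure events in Theorems~\ref{thm:separation} and ~\ref{thm:Carleson}, so the dichotomy should follow by intersecting the relevant probability-one (respectively probability-zero) events. The observation that makes the threshold clean is that, for the Poisson process, separation is strictly the tighter of the two conditions: the very summability hypothesis $\sum_{n,k}\mu_{n,k}^2<\infty$ that guarantees separation also furnishes an admissible exponent $\gamma=2>1$ in Theorem~\ref{thm:Carleson}(a), and hence automatically the $1$-Carleson property.

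For the convergence half, I would assume $\sum_{n,k}\mu_{n,k}^2<\infty$. Theorem~\ref{thm:separation} with $M=1$ gives that $\Lambda_\mu$ is almost surely separated, while taking $\gamma=2$ in Theorem~\ref{thm:Carleson}(a) gives that $\Lambda_\mu$ is almost surely $1$-Carleson. The intersection of two almost-sure events is again almost sure, so with probability one $\Lambda_\mu$ is both separated and $1$-Carleson; by Carleson's theorem it is therefore almost surely $H^p$-interpolating, simultaneously for all $0<p\leq\infty$.

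For the divergence half, I would assume $\sum_{n,k}\mu_{n,k}^2=\infty$. Theorem~\ref{thm:separation} then yields $\P(\Lambda_\mu\text{ separated})=0$. Since separation is a necessary condition for $H^p$-interpolation in Carleson's characterization, the event that $\Lambda_\mu$ is $H^p$-interpolating is contained in the event that it is separated, and hence also has probability zero. This settles the $0$ case and completes the dichotomy. There is in fact no genuine obstacle remaining here: all the analytic and probabilistic work has been carried out in the two preceding theorems, and the only point worth emphasizing is the asymmetry just noted---separation is the binding constraint, and the Carleson requirement comes for free from the same summability threshold. The one thing to be careful about is to invoke Carleson's characterization in the form valid for the whole range $0<p\leq\infty$ (including $p=\infty$), rather than for a single exponent.
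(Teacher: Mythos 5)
Your proposal is correct and follows exactly the paper's argument: the convergence case combines Theorem~\ref{thm:separation} (with $M=1$) and Theorem~\ref{thm:Carleson}(a) (with $\gamma=2$) and then invokes Carleson's characterization of $H^p$-interpolating sequences as separated $1$-Carleson sequences, while the divergence case uses the failure of separation alone. The observation that separation is the binding constraint and the $1$-Carleson property comes for free from the same summability threshold is precisely the point the paper makes.
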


To complete the picture we discuss zero sequences $\Lambda$ for $H^p$, $0<p\leq \infty$. These are deterministically characterized by the Blaschke condition $\sum_{\lambda\in\Lambda}(1-|\lambda|)<\infty$.
Noticing that $\{\sum_{\lambda\in\Lambda_\mu}(1-|\lambda|)<\infty\}$ is a tail event and
using Kolmogorov's 0-1 law we get:

\begin{proposition}\label{prop:Blaschke}
 Let $\Lambda_\mu$ be the Poisson process associated to a positive, $\sigma$-finite, locally finite measure $\mu$. Then, for any $0<p\leq \infty$,
  \[
   \P\bigl(\Lambda_\mu\ \textrm{is a zero set for $H^p$}\bigr) =
   \begin{cases}
   1\quad &\textrm{if $\ \displaystyle\sum\limits_{n,k}2^{-n}\mu_{n,k}<\infty$} \\
   0\quad &\textrm{if $\ \displaystyle\sum\limits_{n,k}2^{-n}\mu_{n,k}=\infty$}.
  \end{cases}
   \]
\end{proposition}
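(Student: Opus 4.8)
Since zero sets of $H^p$ are exactly the sequences satisfying the Blaschke condition, the event to analyze is $\{S<\infty\}$ with $S=\sum_{\lambda\in\Lambda_\mu}(1-|\lambda|)$. The plan is to group points by the dyadic annuli: set $U_n=N_{A_n}$, which is a Poisson variable of parameter $\nu_n=\mu(A_n)=\sum_k\mu_{n,k}$, the $U_n$ being independent because the $A_n$ are disjoint, and each $\nu_n<\infty$ by local finiteness (the closure of $A_n$ lies in $\D$). Because $2^{-(n+1)}<1-|\lambda|\le 2^{-n}$ on $A_n$, one has $\tfrac12\sum_n 2^{-n}U_n<S\le\sum_n 2^{-n}U_n$, so $\{S<\infty\}=\{\sum_n 2^{-n}U_n<\infty\}$, and convergence of this series is unaffected by changing finitely many $U_n$. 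Thus $\{S<\infty\}$ is a tail event for $(U_n)_n$ and Kolmogorov's $0$--$1$ law gives $\P(S<\infty)\in\{0,1\}$; it remains to decide which case occurs in terms of $\Sigma:=\sum_{n,k}2^{-n}\mu_{n,k}=\sum_n 2^{-n}\nu_n$.

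\textbf{The convergent side.} First I would compute $\E[S]=\int_\D(1-|z|)\,d\mu(z)\simeq\Sigma$, either by Campbell's formula for Poisson processes or term by term using $\E[U_n]=\nu_n$. If $\Sigma<\infty$ then $\E[S]<\infty$, hence $S<\infty$ almost surely and the probability equals $1$.

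\textbf{The divergent side (the main obstacle).} When $\Sigma=\infty$ the naive argument fails, since an infinite expectation does not by itself force $S=\infty$ a.s.; this is the delicate point. I would handle it by truncation. Put $W_n=\min(2^{-n}U_n,1)$, so that the $W_n$ are independent, take values in $[0,1]$, and satisfy $W_n\le 2^{-n}U_n$. Two estimates are needed. First, $\E[W_n]\simeq\min(2^{-n}\nu_n,1)$: when $2^{-n}\nu_n\le 1$ one bounds $\E[W_n]\ge 2^{-n}\E[U_n\mathbf 1_{U_n\le 2^n}]=2^{-n}\nu_n\,\P(U_n\le 2^n-1)\gtrsim 2^{-n}\nu_n$ (the probability is bounded below by a universal constant since the mean $\nu_n$ does not exceed $2^n$), while when $2^{-n}\nu_n>1$ one bounds $\E[W_n]\ge\P(U_n\ge 2^n)\gtrsim 1$ (the Poisson mean exceeds $2^n$); the matching upper bound $\E[W_n]\le\min(2^{-n}\nu_n,1)$ is trivial. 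Second, $\sum_n\min(2^{-n}\nu_n,1)=\infty$: writing $a_n=2^{-n}\nu_n$, either infinitely many $a_n$ exceed $1$, which alone makes the capped series diverge, or only finitely many do, in which case those contribute finitely to $\Sigma$ (a finite sum of finite terms) and the divergence of $\Sigma$ comes entirely from terms with $a_n\le1$, on which the cap is inactive. Combining the two estimates yields $\sum_n\E[W_n]=\infty$.

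\textbf{Conclusion on the divergent side.} Finally, since $0\le W_n\le 1$ we have $\Var[W_n]\le\E[W_n^2]\le\E[W_n]$, so the partial sums $T_N=\sum_{n\le N}W_n$ obey $\Var[T_N]\le\E[T_N]\to\infty$. By Chebyshev's inequality $\P(T_N\le\tfrac12\E[T_N])\le 4/\E[T_N]\to 0$, so $T_N\to\infty$ in probability, and monotonicity of the partial sums upgrades this to $\sum_n W_n=\infty$ almost surely. As $\sum_n W_n\le\sum_n 2^{-n}U_n\simeq S$, we conclude $S=\infty$ a.s., i.e.\ the probability is $0$. This last step is exactly the nonnegative case of Kolmogorov's three-series theorem, rendered self-contained by the variance computation, and it is where the genuine work lies; the rest is bookkeeping on the dyadic decomposition.
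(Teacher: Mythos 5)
Your proof is correct. The skeleton coincides with the paper's: group the points by the dyadic annuli $A_n$, use independence of the counting variables $U_n=N_{A_n}$ (the paper's $X_n$), observe that $\{S<\infty\}$ is a tail event, and settle the convergent case by finiteness of $\E[S]$. The divergent case is where the two arguments part ways in mechanism, though not in spirit. The paper discards the indices with $\mu(A_n)\le 2^n/n^2$ (which contribute a convergent sum), and on the remaining indices applies Chebyshev plus Borel--Cantelli to get $X_n>\tfrac12\mu(A_n)$ for all but finitely many $n$, so that $\sum_n 2^{-n}X_n$ inherits the divergence of $\sum_n 2^{-n}\mu(A_n)$. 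You instead truncate $2^{-n}U_n$ at $1$ and run the divergence half of Kolmogorov's three-series theorem, made self-contained by $\Var[W_n]\le\E[W_n]$ and Chebyshev on the partial sums. Your route needs the extra lemma $\E[\min(2^{-n}U_n,1)]\gtrsim\min(2^{-n}\nu_n,1)$, which you justify correctly (the lower bounds on $\P(U_n\le 2^n-1)$ and $\P(U_n\ge 2^n)$ follow from monotonicity of Poisson tail probabilities in the parameter together with the integer-mean median bound); the paper's route avoids this but needs the ad hoc threshold $2^n/n^2$ to make its Borel--Cantelli series summable. Both are second-moment arguments; yours is the more standard and portable formulation, the paper's slightly more hands-on.
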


Observe that the condition is just
\[
 \E\bigl[\sum_{\lambda\in\Lambda_\mu}(1-|\lambda|)\bigr]=\E\bigl[\sum_{n,k}\sum_{\lambda\in T_{n,k}}(1-|\lambda|)\bigr]
 \simeq \sum\limits_{n,k}2^{-n}\E\bigl[X_{n,k}\bigr] =\sum\limits_{n,k}2^{-n}\mu_{n,k}<\infty.
\]
Observe also that $\sum_{k=0}^{2^n-1}\mu_{n,k}=\mu(A_n)$ for all $n\in\N$, hence
\[
 \sum\limits_{n,k}2^{-n}\mu_{n,k}=\sum_n 2^{-n}\mu(A_n).
\]

\begin{proof}[Proof of Proposition~\ref{prop:Blaschke}]
Denote $X_n=N_{A_n}=\sum_{k=0}^{2^n-1} X_{n,k}$ and denote $\mu_n=\E[X_n]=\mu(A_n)$.

Assume first that $\sum_{n} 2^{-n} \mu_n<+\infty$. Set $Y=\sum_n 2^{-n} X_n$ and observe that, by the independence of the different $X_n$,
\begin{align*}
\E[Y]&=\sum_n 2^{-n} \mu_n<+\infty\quad , \quad
&& \Var(Y)=\sum_n 2^{-2n}\mu_n<+\infty.
\end{align*}
Then, by Markov's inequality
\[
 \P(Y\ge 2\E(Y))\le \frac{1}{2}.
\]
Since $\{Y=\infty\}$ is a tail event, Kolmogorov's 0-1 law implies that $\P(Y=+\infty)=0$, and in particular
the Blaschke sum is finite almost surely.

Assume now that $\sum_{n} 2^{-n} \mu_n=+\infty$. Split the sum in two parts:
\[
 \sum_n 2^{-n}\mu_n=\sum_{n:\mu_n\le 2^n/n^2}2^{-n}\mu_n+ 
 \sum_{n:\mu_n> 2^n/n^2}2^{-n}\mu_n.
\]
It is enough to consider the second sum, since the first one obviously converges. Since $\Var[X_n]=\mu_n$, Chebyshev's inequality yields, 
\[
\P(X_n\le \frac{1}{2}\mu_n) =\P(X_n\le \mu_n-\frac{\mu_n}{2})\leq \P(|X_n- \mu_n|\geq \frac{\mu_n}{2})
 \le \frac{4}{\mu_n}.
\]
Hence
\[
 \sum_{n:\mu_n> 2^n/n^2} \P(X_n\le \frac{1}{2}\mu_n) 
 \le \sum_{n:\mu_n> 2^n/n^2}\frac{4}{\mu_n}\le \sum_{n:\mu_n> 2^n/n^2}\frac{4n^2}{2^n}
 <+\infty.
\]
Now, by the Borel-Cantelli lemma,  $X_n>\frac{1}{2}\mu_n$ for all but maybe a finite number of the $n$
with $\mu_n>2^n/n^2$; hence
\[
 \sum_{n:\mu_n> 2^n/n^2} 2^{-n}X_n
 \succsim\frac{1}{2}\sum_{n:\mu_n> 2^n/n^2}2^{-n}\mu_n,
\]
which diverges, by hypothesis.
\end{proof}

\subsubsection{Remark. Interpolation in Bergman spaces}
 Interpolating sequences $\Lambda$ for the (weighted) Bergman spaces
 \[
  B_\alpha^p=\Bigl\{f\in H(\D) : \|f\|_{\alpha,p}^p=\int_{\D} |f(z)|^p (1-|z|^2)^{\alpha p-1} dm(z)<\infty\Bigr\},
 \]
with $0<\alpha$, $0<p\leq \infty$ are characterized by the separation together with the upper density condition
\[
 D_+(\Lambda):=\limsup_{r\to 1^-} \sup_{z\in\D} \frac{\sum\limits_{1/2<\rho(z,\lambda)\leq r}\log\frac 1{\rho(z,\lambda)}}{\log(\frac 1{1-r})}<\alpha
\]
(see \cite{Se2} and \cite{HKZ}*{Chapter 5} for both the definitions and the results).

Since every $1$-Carleson sequence has density $D_+(\Lambda)=0$, the same conditions of Theorem~\ref{thm:Hardy} also characterize a.s. Bergman   interpolating sequences, regardless of the indices $\alpha$ and $p$. Again, because of the big fluctuations of the Poisson process, the conditions required to have separation a.s. are so strong that they can only produce sequences of zero upper density.

Another indication of the big fluctuations of the Poisson process is the following. For 
the invariant measure $d\nu(z)=\frac{dm(z)}{(1-|z|^2)^2}$, which obviously satisfies $\nu_{n,k}\simeq 1$ for all $n$, $k$, it is not difficult to see that almost surely,
\[
  D_+(\Lambda_{\nu})=+\infty\qquad\textrm{and}\quad D_-(\Lambda_{\nu}):=\liminf_{r\to 1^-} \inf_{z\in\D} \frac{\sum\limits_{1/2<\rho(z,\lambda)\leq r}\log\frac 1{\rho(z,\lambda)}}{\log(\frac 1{1-r})}=0.
\]
Therefore there are way too many points for $\Lambda_\nu$ to be interpolating for any $B_\alpha^p$, but there are too few for it to be sampling, since these sets must have strictly positive lower density $D_-(\Lambda)$ (see \cite{HKZ}*{Chapter 5}).

\subsection{Interpolation in the Bloch space}

We consider now interpolation in the Bloch space $\mathcal B$, consisting of functions $f$ holomorphic in $\D$ such that
\[
 \|f\|_{\mathcal B}:=|f(0)|+\sup_{z\in\D}|f'(z)|(1-|z|^2)<+\infty.
\]
Since Bloch functions satisfy the Lipschitz condition $|f(z)-f(w)|\leq \|f\|_{\mathcal B}\, \delta(z,w)$, where
$\delta(z,w)=\frac{1}{2}\log\frac{1+\rho(z,w)}{1-\rho(z,w)}$ denotes the hyperbolic distance, A. Nicolau and B. B\o e   defined interpolating sequences for $\mathcal B$ as those $\Lambda=\{\lambda_n\}_{n\in\N}$ such that for every sequence of values $\{v_n\}_{n\in\N}$ with $\sup\limits_{n\neq m}\frac{|v_n-v_m|}{\delta(\lambda_n,\lambda_m)}<\infty$ there exists $f\in\mathcal B$ with $f(\lambda_n)=v_n$, $n\in\N$ \cite{BN}. 

\begin{theorem*}[\cite{BN}*{pag.172}, \cite{S}*{Theorem 7}]\label{thmBN}
A sequence $\Lambda$ of distinct points in $\D$ is an interpolating sequence for
$\mathcal B$ if and only if:
\begin{itemize}
\item[(a)] $\Lambda$ can be expressed as the union of at most two separated sequences,

\item[(b)] for some $0<\gamma<1$ and $C>0$,
\[
 \#\bigl\{\lambda\in\Lambda : \rho(z,\lambda)<r\bigr\}\le \frac{C}{(1-r)^{\gamma}}
\]
independently on $z\in\D$.
\end{itemize}
\end{theorem*}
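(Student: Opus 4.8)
The plan is to prove the two implications separately, organising everything around the restriction operator $R\colon\mathcal B\to\ell(\Lambda)$, $Rf=(f(\lambda_n))_n$, where $\ell(\Lambda)$ is the space of admissible data equipped with the norm $|v_{n_0}|+\sup_{n\neq m}|v_n-v_m|/\delta(\lambda_n,\lambda_m)$ for a fixed base index $n_0$. Interpolation for $\mathcal B$ is precisely the surjectivity of $R$ onto $\ell(\Lambda)$, so by the open mapping theorem it is equivalent to the existence of a bounded linear right inverse. Since $f\in\mathcal B$ is characterised by the hyperbolic-Lipschitz bound $|f(z)-f(w)|\le\|f\|_{\mathcal B}\,\delta(z,w)$, the whole question becomes whether hyperbolic-Lipschitz data on $\Lambda$ can be extended to a \emph{holomorphic} hyperbolic-Lipschitz function on $\D$.

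For the necessity of (b) I would test the bounded right inverse against the logarithmic growth of Bloch functions. Because every $f\in\mathcal B$ obeys $|f(z)|\lesssim\|f\|_{\mathcal B}\log\frac{e}{1-|z|^2}$, prescribing admissible data supported on a pseudohyperbolic ball $\{\lambda:\rho(z,\lambda)<r\}$ forces an interpolant whose oscillation over that ball is bounded by a multiple of $\log\frac1{1-r}$; balancing this ceiling against the number of independent values one is allowed to prescribe yields a bound of the form $\#\{\lambda:\rho(z,\lambda)<r\}\lesssim(1-r)^{-\gamma}$ with $\gamma<1$, uniformly in $z$. The necessity of (a) rests on holomorphic rigidity: near a point the first-order behaviour of $f$ is governed by the single complex number $f'(\lambda)$, i.e.\ two real degrees of freedom, which are exactly enough to match a value together with one difference (a pair of nearby points) but over-determine three mutually $\rho$-close points. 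Quantifying this obstruction — exhibiting admissible Lipschitz data on clusters of three shrinking points that no holomorphic function can match — forces a uniform bound on the local cardinality and, sharpened, the splitting into at most two separated sequences.

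For sufficiency I would construct the right inverse from a decomposition $\Lambda=\Lambda_1\cup\Lambda_2$ into separated sequences. On each separated piece one first builds a smooth, generally non-holomorphic interpolant $F$ by gluing the prescribed values with a partition of unity subordinate to a Whitney-type covering adapted to the hyperbolic metric; separation and the Lipschitz data make $F$ hyperbolic-Lipschitz, so that $|\bar\partial F|(1-|z|^2)$ is bounded and $\bar\partial F$ vanishes near the nodes. One then solves $\bar\partial u=\bar\partial F$ by Hörmander's weighted $L^2$ estimate with a subharmonic weight $\varphi$ having logarithmic singularities $\varphi(z)\sim 2\log\rho(z,\lambda_n)$ at each node: any $L^2(e^{-\varphi})$ solution is thereby forced to vanish at every $\lambda_n$, so $f=F-u$ still interpolates. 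Conditions (a) and (b) are exactly what render such a weight admissible — the point masses of $\Delta\varphi$ at the nodes, superimposed on a Carleson-type background, give a finite weighted estimate and keep $u$ (hence $f$) in $\mathcal B$. Summing over the two separated pieces completes the construction.

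I expect the main obstacle to be this $\bar\partial$-correction in the sufficiency direction: one must produce a solution of $\bar\partial u=\bar\partial F$ that simultaneously lies in $\mathcal B$ \emph{and} vanishes at every interpolation node, and the plain Hörmander or Cauchy solution controls only the global size, never the pointwise values at the $\lambda_n$. Getting both at once is what forces the use of the singular weight, and verifying that the weight built from $\Lambda$ satisfies the precise subharmonicity and Carleson estimates needed for a uniform bound is the technical heart of the matter. In the necessity direction the analogous delicate point is pinning down the sharp constant \emph{two}, rather than merely a finite number, in the decomposition into separated sequences, which is exactly where the genuinely holomorphic — as opposed to merely Lipschitz — nature of the target space must be exploited.
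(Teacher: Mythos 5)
This statement is not proved in the paper at all: it is the B\o e--Nicolau characterization, imported verbatim with citations to \cite{BN} and \cite{S}, and the authors only \emph{use} it (via the equivalent reformulations (b)$'$ and (b)$''$) to derive their random Bloch interpolation theorem. So there is no in-paper proof to compare against, and what you have written must be judged as a self-contained attempt at a known, genuinely hard theorem.

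As such an attempt it is a reasonable road map but not a proof, and the central gap is in the sufficiency direction. Hörmander's theorem produces a solution of $\bar\partial u=\bar\partial F$ with weighted $L^2$ control, whereas membership in $\mathcal B$ is the $L^\infty$-type condition $\sup_z|f'(z)|(1-|z|^2)<\infty$; there is no general mechanism converting an $L^2(e^{-\varphi})$ bound on $u$ into a uniform bound on $|(F-u)'(z)|(1-|z|^2)$, and this is precisely why the actual proof in \cite{BN} is an explicit constructive/iterative scheme built from elementary Bloch ``building blocks'' rather than a $\bar\partial$ argument. Your singular weight $\varphi\sim 2\log\rho(\cdot,\lambda_n)$ also needs a verified global subharmonicity and a uniform estimate when the density in (b) has exponent $\gamma$ close to $1$, and you give no reason these hold; asserting that (a) and (b) ``are exactly what render such a weight admissible'' is the conclusion you need, not an argument. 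In the necessity direction your degree-of-freedom heuristics point in the right direction (the three-close-points obstruction does come from the fact that admissible Lipschitz data are only compatible to first order, while a Bloch function is affine up to an error controlled by $|f''|(1-|z|^2)^2$), but as written neither the exhibition of non-interpolable data on triples nor the passage from ``at most two points per small pseudohyperbolic ball'' to a global decomposition into two separated sequences is carried out. If you want to use this theorem, cite it as the paper does; proving it from scratch along your lines would require replacing the Hörmander step entirely.
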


As explained in \cite{BN}, condition (b) can be replaced by:
\begin{itemize}
 \item [(b)'] for some $0<\gamma<1$ and $C>0$, and for all Carleson windows $Q(I)$,
\[
 \#\bigl\{\lambda\in Q(I) : 2^{-(l+1)}|I|<1-|\lambda| < 2^{-l}|I|\bigr\}\leq C 2^{\gamma l}\ , \qquad l\geq 0.
\]
\end{itemize}

In \cite{S}*{Corollary 2} it is mentioned that it can also be replaced by:
\begin{itemize}
 \item [(b)'']  there exist $0<\gamma<1$ and such that $\Lambda$ is $\gamma$-Carleson. 
\end{itemize}

In view of conditions (a) and (b)'' the following characterization of Poisson processes which are a.s. Bloch interpolating sequences follows from Theorems~\ref{thm:separation} and ~\ref{thm:Carleson}(b) (with $\gamma\in (2/3,1)$).

\begin{theorem}
 Let $\Lambda_\mu$ be the Poisson process associated to a positive, $\sigma$-finite, locally finite measure $\mu$. Then, 
  \[
   \P\bigl(\Lambda_\mu\ \textrm{is $\mathcal B$-interpolating}\bigr) =
   \begin{cases}
   1\quad &\textrm{if $\ \displaystyle\sum\limits_{n,k}\mu_{n,k}^{3}<\infty$} \\
   0\quad &\textrm{if $\ \displaystyle\sum\limits_{n,k}\mu_{n,k}^{3}=\infty$}.
  \end{cases}
   \]
\end{theorem}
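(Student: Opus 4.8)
The plan is to read off both implications from the B\o e--Nicolau characterization stated above, which says that a sequence of distinct points is $\mathcal B$-interpolating precisely when it simultaneously satisfies (a) it is the union of at most two separated sequences, and (b)$''$ it is $\alpha$-Carleson for some $\alpha\in(0,1)$. Since the intensity $\mu$ is non-atomic (as observed in the remarks following Theorem~\ref{thm:Carleson}, we may assume $\mu$ absolutely continuous), the points of $\Lambda_\mu$ are almost surely distinct, so the characterization applies. The whole argument then consists of matching these two geometric conditions against the thresholds already computed in Theorems~\ref{thm:separation} and~\ref{thm:Carleson}, the key numerical observation being that the exponent $3=2+1$ governing ``union of two separated sequences'' lies below $1/(1-\alpha)$ as soon as $\alpha>2/3$.

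For the convergence case, suppose $\sum_{n,k}\mu_{n,k}^{3}<\infty$. Applying Theorem~\ref{thm:separation} with $M=2$ gives that $\Lambda_\mu$ is almost surely a union of two separated sequences, i.e.\ condition (a) holds on an event of probability $1$. To secure (b)$''$, I would fix once and for all some $\alpha\in(2/3,1)$, so that $1<3<\tfrac1{1-\alpha}$, and invoke Theorem~\ref{thm:Carleson}(b) with $\gamma=3$: the hypothesis $\sum_{n,k}\mu_{n,k}^{3}<\infty$ is exactly the summability required there, whence $\Lambda_\mu$ is almost surely $\alpha$-Carleson. Intersecting the two full-measure events, $\Lambda_\mu$ almost surely satisfies both (a) and (b)$''$, and the B\o e--Nicolau theorem yields $\P\bigl(\Lambda_\mu\ \mathcal B\text{-interpolating}\bigr)=1$.

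For the divergence case, suppose $\sum_{n,k}\mu_{n,k}^{3}=\infty$. Now Theorem~\ref{thm:separation} with $M=2$ gives $\P\bigl(\Lambda_\mu\ \text{union of two separated sequences}\bigr)=0$, so condition (a) fails almost surely. Because (a) is a \emph{necessary} condition in the B\o e--Nicolau characterization, $\Lambda_\mu$ is almost surely not $\mathcal B$-interpolating, giving probability $0$. I expect no serious obstacle here: the argument is essentially a bookkeeping exercise combining the two previous theorems. The only point demanding care is the exponent matching in the convergence direction --- one must check that the constraint $1<\gamma<1/(1-\alpha)$ of Theorem~\ref{thm:Carleson}(b) can be met with $\gamma=3$ and some admissible $\alpha<1$, which forces precisely the range $\alpha\in(2/3,1)$ singled out before the statement.
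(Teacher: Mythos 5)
Your argument is correct and is essentially the paper's own proof: the paper likewise derives the result directly from the B\o e--Nicolau characterization via conditions (a) and (b)$''$, applying Theorem~\ref{thm:separation} with $M=2$ for the separation part (in both directions) and Theorem~\ref{thm:Carleson}(b) with $\gamma=3$ and a Carleson order in $(2/3,1)$ for the sufficiency. The only difference is cosmetic --- you spell out the exponent matching and the a.s.\ distinctness of the points, while the paper states the deduction in one line and offers an optional direct verification of condition (b)$'$ as a side note.
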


\emph{Note.} In case $\sum_{n,k}\mu_{n,k}^3<\infty$ it is also possible to prove (b)' directly, with the same methods employed in the proof of Theorem~\ref{thm:Carleson}. It is enough to prove the estimate for dyadic arcs $I_{n,k}$, and for those
\[
 \#\bigl\{\lambda\in Q(I_{n,k}) : 2^{-(l+1)}|I_{n,k}|<1-|\lambda| < 2^{-l}|I_{n,k}|\bigr\}\simeq \sum_{j : T_{n+l,j}\subset Q_{n,k}} X_{n+l, j}.
\]
In the above, the left hand side corresponds essentially to the number of points in the layer $Q(I_{n,k})\cap A_{n+l}$.
Thus, with $m=n+l$, (b)' is equivalent to
\[
 \sup_{n,k}\sup_{m\geq n} 2^{-\gamma(m-n)} \sum_{j : T_{m,j}\subset Q_{n,k}} X_{m,j}<+\infty.
\]
Letting 
\[
Y_{n,k,m}=2^{-\gamma(m-n)} \sum_{j : T_{m,j}\subset Q_{n,k}} X_{m,j}\ ,\quad \E[Y_{n,k,m}]=2^{-\gamma(m-n)} \sum_{j : T_{m,j}\subset Q_{n,k}} \mu_{m,j}
\]
and proceeding as in the first part of the proof of Theorem~\ref{thm:Carleson}(a) we get (taking $A=3$):
\begin{align*}
 \sum_{n,k}\sum_{m\geq n} \P\bigl(Y_{n,k,m}\geq 3\bigr)&\lesssim\sum_{n,k}\sum_{m\geq n}
 \Bigl[2^{-\gamma(m-n)} \sum_{j : T_{m,j}\subset Q_{n,k}} \mu_{m,j}\Bigr]^3\\
 &\leq \sum_{n,k}\sum_{m\geq n} 2^{-3\gamma(m-n)} \sum_{j : T_{m,j}\subset Q_{n,k}} \mu_{m,j}^3\ 2^{2(m-n)}\\
 &=\sum_{m,j} \mu_{m,j}^3 \sum_{n\leq m} \sum_{k : Q_{n,k} \supseteq T_{m,j}} 2^{-(3\gamma-2)(m-n)}.
\end{align*}
For any $\gamma>2/3$ this sum is bounded by $\sum_{m,j} \mu_{m,j}^3$, so we can conclude with the Borel-Cantelli lemma.

\subsection{Interpolation in Dirichlet spaces}

Our last set of results concerns interpolation in the Dirichlet spaces, 
\[
 \mathcal D_\alpha=\bigl\{f\in H(\D) : \|f\|_{\mathcal D_\alpha}^2=|f(0)|^2+\int_{\mathbb D} |f'(z)|^2 (1-|z|^2)^{\alpha} dm(z)<\infty\bigr\},
\]
with $\alpha\in(0,1)$. The limiting case $\alpha=1$ can be identified with the Hardy space $H^2$. 

In these spaces, interpolating sequences are characterized by the separation and a Carleson type condition. This was initially considered by W.S. Cohn, see \cite{Coh}; we refer also to the general result \cite{AHMR}. While separation is a simple condition, that in our random setting is completely characterized by Theorem \ref{thm:separation}, the characterization of Carleson measures in these spaces is much more delicate. This was achieved by D. Stegenga using the so-called $\alpha$-capacity \cite{St}. In our setting it is however possible to use an easier sufficient one-box condition that can be found in K. Seip's book, see \cite{S}*{Theorem 4, p.38}, which we recall here for the reader's convenience.

\begin{theorem}[Seip]
A separated sequence $\Lambda$ in $\D$ is interpolating for $\mathcal D_{\alpha}$, $0<\alpha<1$ if there exist $0<\alpha'<\alpha$ 
such that $\Lambda$ is $\alpha'$-Carleson.
\end{theorem}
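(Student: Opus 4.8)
The plan is to reduce the interpolation problem to a Carleson measure condition and then extract that condition from the hypotheses. The space $\mathcal D_\alpha$ is a reproducing kernel Hilbert space whose kernel satisfies $k_\lambda(z)\simeq(1-\bar\lambda z)^{-\alpha}$ (as one sees from $\|z^n\|_{\mathcal D_\alpha}^2\simeq n^{1-\alpha}$), and for $0<\alpha<1$ this kernel is of complete Nevanlinna--Pick type, since $1-(1-\bar\lambda z)^{\alpha}$ has non-negative Taylor coefficients. Thus the general characterization of interpolating sequences in \cite{AHMR} (see also \cite{Coh}) applies: a sequence $\Lambda$ is interpolating for $\mathcal D_\alpha$ if and only if it is separated and the measure
\[
 \mu_\Lambda=\sum_{\lambda\in\Lambda}\|k_\lambda\|_{\mathcal D_\alpha}^{-2}\,\delta_\lambda\simeq \sum_{\lambda\in\Lambda}(1-|\lambda|^2)^{\alpha}\,\delta_\lambda
\]
is a Carleson measure for $\mathcal D_\alpha$, i.e.\ $\mathcal D_\alpha\hookrightarrow L^2(\mu_\Lambda)$. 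Since $\Lambda$ is separated by hypothesis, it remains only to verify this embedding.

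For that I would invoke Stegenga's capacitary characterization \cite{St}: $\mu_\Lambda$ is a Carleson measure for $\mathcal D_\alpha$ precisely when
\[
 \mu_\Lambda\Bigl(\bigcup_j Q(I_j)\Bigr)\lesssim \operatorname{Cap}_\alpha\Bigl(\bigcup_j I_j\Bigr)
\]
over all finite families of disjoint arcs, where the relevant Riesz capacity satisfies $\operatorname{Cap}_\alpha(I)\simeq|I|^{\alpha}$ on a single arc. The one-arc instance of this condition is exactly $\mu_\Lambda(Q(I))\lesssim|I|^{\alpha}$, that is, the critical $\alpha$-Carleson condition; it is necessary but, because capacity is only sub-additive, not sufficient. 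The strict gain $\alpha-\alpha'>0$ is what repairs this. Splitting $(1-|\lambda|)^{\alpha}=(1-|\lambda|)^{\alpha-\alpha'}(1-|\lambda|)^{\alpha'}$, using $1-|\lambda|\simeq 2^{-m}$ on $A_m$ and the $\alpha'$-Carleson bound $\sum_{\lambda\in Q(I)}(1-|\lambda|)^{\alpha'}\lesssim|I|^{\alpha'}$, I obtain the layer estimate
\[
 \mu_\Lambda\bigl(Q(I)\cap A_m\bigr)\lesssim 2^{-(m-n)(\alpha-\alpha')}\,|I|^{\alpha},\qquad |I|=2^{-n},\ m\ge n,
\]
so the $\mu_\Lambda$-mass decays geometrically as one descends into any Carleson window.

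This concentration of mass near $\T$ is exactly what is absent in the merely critical case (where, as for a full lattice, mass reaches all depths and Stegenga's condition fails), and it is the heart of the matter. With the decay in hand, I would verify the capacitary condition for an arbitrary union $\bigcup_j I_j$ by a Whitney/dyadic covering argument: taking a near-optimal covering $\{J_i\}$ of the bases with $\sum_i|J_i|^{\alpha}\simeq\operatorname{Cap}_\alpha(\bigcup_j I_j)$, the geometric decay confines the relevant $\mu_\Lambda$-mass to a bounded number of layers below each $J_i$, so that $\mu_\Lambda(\bigcup_j Q(I_j))\lesssim\sum_i|J_i|^{\alpha}$ without deep overlaps accumulating. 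Separation enters only to keep the number of points in each $T_{m,j}$ bounded, so that the layer masses are genuinely of size $\lesssim 2^{-m\alpha}$. The main obstacle is precisely this sub-additivity of capacity: a critical ($\alpha$-Carleson) box bound cannot control sparse families of tall Carleson windows, and one must spend the strict slack $\alpha-\alpha'>0$ to produce the geometric decay that tames them. This mechanism is exactly what is packaged in the one-box sufficient condition of \cite{S}*{Theorem 4, p.38}.
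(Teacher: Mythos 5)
First, a remark on context: the paper does not prove this statement at all --- it is quoted verbatim (with a change of normalization) from Seip's book \cite{S}*{Theorem 4, p.38}, so there is no internal proof to compare yours against. Judged on its own merits, your sketch has the right skeleton: identifying $\mathcal D_\alpha$ as a reproducing kernel Hilbert space with a complete Pick kernel comparable to $(1-\bar\lambda z)^{-\alpha}$, reducing via \cite{AHMR} to ``separated plus Carleson embedding of $\mu_\Lambda=\sum_\lambda(1-|\lambda|^2)^\alpha\delta_\lambda$'', and then exploiting the strict gap $\alpha-\alpha'>0$ to get the geometric layer decay $\mu_\Lambda(Q(I)\cap A_m)\lesssim 2^{-(m-n)(\alpha-\alpha')}|I|^\alpha$. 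That layer estimate is correct and is indeed the analytic heart of the matter; your observation that the critical one-box bound $\mu_\Lambda(Q(I))\lesssim|I|^\alpha$ alone is necessary but not sufficient is also correct.

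The genuine gap is the last step, where the layer decay must be upgraded to the full Carleson embedding. Your proposed route through Stegenga's capacitary condition breaks down: you cover $\bigcup_j I_j$ by arcs $\{J_i\}$ with $\sum_i|J_i|^\alpha\simeq\operatorname{Cap}_\alpha\bigl(\bigcup_j I_j\bigr)$, but the infimum of $\sum_i|J_i|^\alpha$ over coverings is the Hausdorff content $\mathcal H^\alpha_\infty$, which dominates the Riesz-type capacity but is \emph{not} dominated by it (there are compact sets of zero capacity and positive critical Hausdorff content), so such a near-optimal covering need not exist and the inequality $\mu_\Lambda(\bigcup_j Q(I_j))\lesssim\operatorname{Cap}_\alpha(\bigcup_j I_j)$ does not follow. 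The assertion that the decay ``confines the mass to a bounded number of layers'' is also not accurate --- it only makes the layer sums geometrically summable --- and the closing appeal to ``the one-box sufficient condition of \cite{S}*{Theorem 4}'' is circular, since that is precisely the theorem being proved. To close the gap one must actually prove the embedding $\mathcal D_\alpha\hookrightarrow L^2(\mu_\Lambda)$ from the layer decay, e.g.\ by a direct dyadic/maximal-function or tree-condition argument in the spirit of Arcozzi--Rochberg--Sawyer (this is essentially what Seip's proof does), rather than by comparing capacity with covering sums.
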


The reader should be alerted that in Seip's book the space $\mathcal D_{\alpha}$ is defined in a slightly different way, and that the above statement is adapted to our definition.

For these spaces Theorems \ref{thm:separation} and \ref{thm:Carleson} lead to less precise conclusions. Indeed, in view of Theorem~\ref{thm:Carleson}(c),(d) we cannot hope for complete characterizations if we do not impose additional conditions on the measure $\mu$
.

\begin{theorem}\label{thm:Dirichlet}
 Let $\Lambda_\mu$ be the Poisson process associated to a positive, $\sigma$-finite, locally finite measure $\mu$.
 
 \begin{itemize}
  \item [(a)] If $1/2< \alpha<1$, then
\[
P\bigl(\Lambda_\mu \text{ is interpolating for $\mathcal D_\alpha$}\bigr)=
\begin{cases}
 1\quad \textrm{if $\ \displaystyle\sum\limits_{n,k} \mu_{n,k}^{2}<+\infty$}\\
 0\quad \textrm{if $\ \displaystyle\sum\limits_{n,k} \mu_{n,k}^{2}=+\infty$}.
\end{cases}
\]
  
\item [(b)] If $0\le \alpha<1/2$ and there exists $1<\gamma<\frac{1}{1-\alpha}$ such that $\sum\limits_{n,k} \mu_{n,k}^{\gamma}<+\infty$, then
\[
P\bigl(\Lambda_\mu \text{ is interpolating for $\mathcal D_\alpha$}\bigr)=1.
\]
 \end{itemize}
\end{theorem}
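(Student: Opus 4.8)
The plan is to combine the quoted sufficient condition of Seip---a separated sequence that is $\alpha'$-Carleson for some $\alpha'<\alpha$ is interpolating for $\mathcal D_\alpha$---with our two main theorems, which tell us precisely when $\Lambda_\mu$ is almost surely separated (Theorem~\ref{thm:separation}) and give a sufficient condition for it to be almost surely $\alpha'$-Carleson (Theorem~\ref{thm:Carleson}(b)). The whole argument then reduces to choosing the auxiliary exponents $\alpha'$ and $\gamma$ so that the hypothesis of Theorem~\ref{thm:Carleson}(b) is met while keeping $\alpha'<\alpha$; the single negative assertion, in part (a), will instead follow from the \emph{necessity} of separation for $\mathcal D_\alpha$-interpolation recorded in \cite{Coh},\cite{AHMR}.

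For the positive direction of part (a), I would assume $\sum_{n,k}\mu_{n,k}^2<\infty$. By Theorem~\ref{thm:separation} the sequence $\Lambda_\mu$ is almost surely separated. Since $\alpha>1/2$, I can fix $\alpha'$ with $1/2<\alpha'<\alpha$, so that $\tfrac{1}{1-\alpha'}>2$; taking $\gamma=2$ in Theorem~\ref{thm:Carleson}(b), which is legitimate because $1<2<\tfrac1{1-\alpha'}$ and $\sum_{n,k}\mu_{n,k}^2<\infty$, shows that $\Lambda_\mu$ is almost surely $\alpha'$-Carleson. Seip's theorem then gives that $\Lambda_\mu$ is almost surely interpolating for $\mathcal D_\alpha$. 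For the negative direction, if $\sum_{n,k}\mu_{n,k}^2=\infty$ then Theorem~\ref{thm:separation} forces $\Lambda_\mu$ to be almost surely non-separated, and since separation is necessary for interpolation in $\mathcal D_\alpha$ the sequence is almost surely non-interpolating.

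For part (b), with $0\le\alpha<1/2$ and a fixed $\gamma$ satisfying $1<\gamma<\tfrac1{1-\alpha}$ and $\sum_{n,k}\mu_{n,k}^\gamma<\infty$, I first note that $\tfrac1{1-\alpha}<2$ forces $\gamma<2$. Because $\sum_{n,k}\mu_{n,k}^\gamma<\infty$ implies $\mu_{n,k}\to0$, all but finitely many $\mu_{n,k}$ lie in $[0,1]$, where $\mu_{n,k}^2\le\mu_{n,k}^\gamma$; hence $\sum_{n,k}\mu_{n,k}^2<\infty$ and Theorem~\ref{thm:separation} yields almost sure separation. For the Carleson property, the inequality $\gamma<\tfrac1{1-\alpha}$ is equivalent to $1-\tfrac1\gamma<\alpha$, so the interval $(1-\tfrac1\gamma,\alpha)$ is nonempty; choosing $\alpha'$ in it gives $\gamma<\tfrac1{1-\alpha'}$ with the \emph{same} $\gamma$, and since $0<\alpha'<1$, Theorem~\ref{thm:Carleson}(b) makes $\Lambda_\mu$ almost surely $\alpha'$-Carleson. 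Seip's theorem again concludes that $\Lambda_\mu$ is almost surely interpolating for $\mathcal D_\alpha$.

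The only real subtlety is the parameter bookkeeping, and it is also where the threshold $1/2$ comes from: the separation exponent supplied by Theorem~\ref{thm:separation} is always $2$, so to feed it into Theorem~\ref{thm:Carleson}(b) one needs $2<\tfrac1{1-\alpha'}$ for some $\alpha'<\alpha$, which is possible exactly when $\alpha>1/2$. This is precisely why part (a) is a clean dichotomy, whereas part (b), where $\tfrac1{1-\alpha}\le2$, only offers a sufficient condition and must borrow a stronger integrability hypothesis on $\mu$ directly, as dictated by the sharpness examples in Theorem~\ref{thm:Carleson}(c),(d).
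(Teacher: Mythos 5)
Your proposal is correct and follows essentially the same route as the paper: almost sure separation from Theorem~\ref{thm:separation}, almost sure $\alpha'$-Carleson for some $\alpha'<\alpha$ from Theorem~\ref{thm:Carleson}(b) after the same exponent bookkeeping, Seip's one-box sufficient condition to conclude, and necessity of separation for the negative direction of (a). The only (harmless) deviation is that in part (a) you apply Theorem~\ref{thm:Carleson}(b) directly with $\gamma=2$, whereas the paper picks $\gamma\in(2,1/(1-\alpha'))$ and compares $\sum_{n,k}\mu_{n,k}^{\gamma}$ with $\sum_{n,k}\mu_{n,k}^{2}$; both are valid.
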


Clearly, the condition $\sum_{n,k}\mu_{n,k}^2<+\infty$ is also necessary in the case (b)  (if the sum diverges, then $\Lambda_{\mu}$ is almost surely not separated).

\begin{proof}
(a) If $\sum_{n,k}\mu_{n,k}^2=+\infty$, then $\Lambda_{\mu}$ is almost surely not separated by Theorem \ref{thm:separation}, hence it is almost surely not interpolating.

If $\sum_{n,k}\mu_{n,k}^2<+\infty$, Theorem \ref{thm:separation}  shows again that the sequence $\Lambda_{\mu}$ is almost surely separated. By Seip's theorem, it remains to show that $\Lambda_{\mu}$ is almost surely $\alpha'$-Carleson for some $\alpha'<\alpha$.  Pick $1/2<\alpha'<\alpha<1$, so that $1/(1-\alpha')>2$. Choosing $\gamma\in (2,1/(1-\alpha'))$ we get 
\[
 \sum_{n,k}\mu_{n,k}^{\gamma}\lesssim \sum_{n,k}\mu_{n,k}^2<+\infty,
\]
and by Theorem \ref{thm:Carleson}(b) we conclude that $\Lambda_{\mu}$ is almost surely $\alpha'$-Carleson.

(b) If $\alpha<1/2$ then $1/(1-\alpha)<2$ and the value $\gamma$ given by the hypothesis satisfies $1<\gamma<2$. Therefore
\[
  \sum_{n,k}\mu_{n,k}^2\lesssim \sum_{n,k}\mu_{n,k}^{\gamma}<+\infty,
\]
which allows to deduce from Theorem \ref{thm:separation} that $\Lambda_{\mu}$ is almost surely separated.

Since the inequality $\gamma<1/(1-\alpha)$ is strict, we also have $\gamma<1/(1-\alpha')$ for some $\alpha'<\alpha$ sufficiently close to $\alpha$. Again, Theorem \ref{thm:Carleson}(b) shows that $\Lambda_{\mu}$ is almost surely $\alpha'$-Carleson, and Seip's theorem implies that $\Lambda_{\mu}$ is almost surely interpolating.
\end{proof} 

\subsection{Additional remarks and comments}

The above results show several applications of our Theorems \ref{thm:separation} and \ref{thm:Carleson}, but they also give rise to many challenging questions. Is it possible to get a necessary counterpart of Theorem~\ref{thm:Carleson}(b) under reasonable conditions on $\mu$ (more general than the class considered in Section \ref{Examples} below)? Is it possible to get precise statements when $\alpha=1/2$? Also, the case of the classical Dirichlet space seems to be largely unexplored for Poisson point processes, while the situation regarding interpolation, separation and zero-sets for the radial probabilistic model is completely known for all $\alpha\in [0,1]$ (see \cites{CHKW,Bog}). 


\section{Examples and integral conditions for the measure $\mu$}
In the first part of this final section we illustrate the above results with three simple families of measures on $\D$. In the second part we briefly discuss alternative, non-discrete, formulations of the conditions given in the previous statements.

\subsection{Examples}\label{Examples}
\emph{1. Radial measures}. Let $dm$ denote the normalized Lebesgue measure and let $d\nu(z)=\frac{dm(z)}{(1-|z|^2)^2}$ be the invariant measure in $\D$. Define
\[
d\mu(a,b)(z)= \frac{dm(z)}{(1-|z|^2)^{a} \log^b\bigl(\frac e{1-|z|^2}\bigr)}= 
\frac{d\nu(z)}{(1-|z|^2)^{a-2} \log^b\bigl(\frac e{1-|z|^2}\bigr)},
\]
where either $a> 1$, $b\in\R$, or $a=1$ and $b\leq 1$ (so that $\mu(a,b)(\D)=+\infty$).

Observe that
\[
 \mu(a,b)_{n,k}\simeq \frac{2^{-n(2-a)}}{n^b}\qquad n\geq 1,\ k=0, \dots, 2^n-1,
\]
and therefore, for $\gamma>0$,
\begin{equation}\label{ex:radial}
 \sum_{n,k} \mu(a,b)_{n,k}^\gamma\simeq \sum_n 2^n \frac{2^{-n(2-a)\gamma}}{n^{b\gamma}}=
 \sum_n\frac{2^{-n[(2-a)\gamma-1]}}{n^{b\gamma}}.
\end{equation}

\begin{proposition} Consider the Poisson process $\Lambda_{a,b}$ associated to the masure $\mu(a,b)$, with either $a>1$ or $a=1$ and $b\leq 1$.
 \begin{itemize}
  \item [(a)] $\Lambda_{a,b}$ can a.s.  be expressed as a union of $M$ separated sequences if and only if either $a<2-\frac 1{M+1}$ and $b\in\R$, or $a=2-\frac 1{M+1}$ and $b>\frac 1{M+1}$.
  
\item [(b)]  In particular, $\Lambda_{a,b}$ is a.s. separated  if and only if either $a<3/2$ and $b\in\R$, or $a=3/2$ and $b> 1/2$.

\item [(c)] $\Lambda_{a,b}$ is a.s. a 1-Carleson sequence if and only if $a<2$, $b\in\R$.

\item [(d)] Let $\alpha\in (0,1)$. Then
 $\Lambda_{a,b}$ is a.s. an $\alpha$-Carleson sequence if and only if $a<1+\alpha$ or $a=1+\alpha$ and $b> 1$. 
 \end{itemize}

\end{proposition}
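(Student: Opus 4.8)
The plan is to verify each claim by plugging the asymptotic formula \eqref{ex:radial} into the criteria established in Theorems~\ref{thm:separation} and \ref{thm:Carleson}, since the entire Proposition is a dictionary translation from conditions on sums $\sum_{n,k}\mu_{n,k}^\gamma$ to conditions on the parameters $(a,b)$. The key computational tool is that, by \eqref{ex:radial},
\[
 \sum_{n,k}\mu(a,b)_{n,k}^\gamma\simeq \sum_n \frac{2^{-n[(2-a)\gamma-1]}}{n^{b\gamma}},
\]
and this series converges if and only if either the geometric exponent is positive, i.e.\ $(2-a)\gamma-1>0$, or the exponent vanishes and the residual power of $n$ beats $1$, i.e.\ $(2-a)\gamma-1=0$ together with $b\gamma>1$. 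Everything reduces to comparing $a$ against the threshold $2-1/\gamma$.

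First I would prove (a). Here the relevant exponent is $\gamma=M+1$, and Theorem~\ref{thm:separation} says $\Lambda_{a,b}$ is a.s.\ a union of $M$ separated sequences exactly when $\sum_{n,k}\mu(a,b)_{n,k}^{M+1}<\infty$. Applying the convergence dichotomy above with $\gamma=M+1$: the geometric exponent $(2-a)(M+1)-1$ is positive precisely when $a<2-\frac1{M+1}$, in which case convergence holds for every $b\in\R$; the boundary case $a=2-\frac1{M+1}$ makes the geometric exponent vanish, leaving $\sum_n n^{-b(M+1)}$, which converges iff $b(M+1)>1$, i.e.\ $b>\frac1{M+1}$; and when $a>2-\frac1{M+1}$ the geometric exponent is negative so the series diverges for all $b$. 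This matches the stated conditions exactly. Part (b) is then just the specialization $M=1$, giving the threshold $2-\frac12=\frac32$ and boundary condition $b>\frac12$, so it requires no separate argument.

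For (c), Theorem~\ref{thm:Carleson}(a) characterizes a.s.\ $1$-Carleson by the existence of \emph{some} $\gamma>1$ with $\sum_{n,k}\mu_{n,k}^\gamma<\infty$ (and the almost-sure failure when the sum diverges for all $\gamma>1$). If $a<2$, then for $\gamma$ close enough to $1$ we have $a<2-\frac1\gamma$, so the geometric exponent is positive and the series converges; thus $\Lambda_{a,b}$ is a.s.\ $1$-Carleson regardless of $b$. Conversely, if $a\ge2$ then $a>2-\frac1\gamma$ for every $\gamma>1$, forcing divergence for all such $\gamma$, so by Theorem~\ref{thm:Carleson}(a) the sequence is a.s.\ not $1$-Carleson. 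For (d), with $\alpha\in(0,1)$ fixed, the sufficiency half of Theorem~\ref{thm:Carleson}(b) gives a.s.\ $\alpha$-Carleson as soon as some $1<\gamma<\frac1{1-\alpha}$ makes the sum finite; one checks that $a<1+\alpha$ allows choosing such a $\gamma$ with positive geometric exponent, and that $a=1+\alpha$, $b>1$ lands exactly on the critical exponent $\gamma=\frac1{1-\alpha}$ with $b\gamma>1$, so that the sum $\sum_{n,k}\mu_{n,k}^{1/(1-\alpha)}<\infty$. The main obstacle—and the only place where the general Theorem~\ref{thm:Carleson} does not suffice—is the \emph{necessity} direction in (d), since Theorem~\ref{thm:Carleson}(b) is only a sufficient condition and parts (c),(d) of that theorem warn that nothing sharp holds past the breakpoint in general. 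To close this gap I would invoke the explicit computation of $\E(Y_{n,k})$ carried out in the proof of Theorem~\ref{thm:Carleson}(c): for $a>1+\alpha$, or for $a=1+\alpha$ with $b\le1$, the expected weight $\E(Y_{n,k})=2^{n\alpha}\sum_{m\ge n}2^{-m\alpha}\sum_{j:T_{m,j}\subset Q_{n,k}}\mu_{m,j}$ of a single Carleson window diverges by the same summation as in case (c), so $\Lambda_{a,b}$ cannot be $\alpha$-Carleson. This is exactly where the special structure of the radial family pays off—the measure is explicit enough that the borderline case can be resolved directly rather than through the general theorem, which is precisely why these examples are introduced.
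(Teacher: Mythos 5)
Your reductions for (a), (b), (c) and for the necessity half of (d) are correct and follow the paper's own route: translate \eqref{ex:radial} into the dichotomy ``$(2-a)\gamma-1>0$, or $=0$ with $b\gamma>1$'' and feed it into Theorems~\ref{thm:separation} and~\ref{thm:Carleson}; for the necessity in (d) the paper likewise falls back on the divergence of $\E(Y_{n,k})$ from the proof of Theorem~\ref{thm:Carleson}(c) (phrased as a monotonicity comparison with $\mu(1+\alpha,1)$).

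There is, however, a genuine gap in your sufficiency argument for the boundary case $a=1+\alpha$, $b>1$ of part (d). You invoke ``the sufficiency half of Theorem~\ref{thm:Carleson}(b)'' while observing that this case ``lands exactly on the critical exponent $\gamma=\frac{1}{1-\alpha}$.'' But Theorem~\ref{thm:Carleson}(b) requires a \emph{strict} inequality $1<\gamma<\frac{1}{1-\alpha}$, and for $a=1+\alpha$ the geometric exponent $(2-a)\gamma-1=(1-\alpha)\gamma-1$ is negative for every such $\gamma$, so $\sum_{n,k}\mu_{n,k}^{\gamma}=+\infty$ for all admissible $\gamma$ and the theorem simply does not apply. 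Finiteness of $\sum_{n,k}\mu_{n,k}^{1/(1-\alpha)}$ at the endpoint is not enough: Theorem~\ref{thm:Carleson}(c) exhibits precisely a measure (namely $\mu(1+\alpha,1)$) with $\sum_{n,k}\mu_{n,k}^{1/(1-\alpha)}<+\infty$ that is almost surely \emph{not} $\alpha$-Carleson. So this is not ``the only place where the general theorem does not suffice'' being the necessity direction — the sufficiency at the boundary also needs a bespoke argument. The paper closes it by rerunning the Borel--Cantelli machinery directly on this explicit measure: it computes
\[
B_{n,k}=\sum_{m\ge n}2^{-(m-n)\alpha}\sum_{j:T_{m,j}\subset Q_{n,k}}\mu_{m,j}\simeq \frac{2^{-n(1-\alpha)}}{n^{b}},
\]
then checks that $\sum_{n,k}B_{n,k}^{A}\simeq\sum_{n}n^{-Ab}<+\infty$ for $A=1/(1-\alpha)$ (here $b>1>1-\alpha$ is exactly what makes this converge), and concludes via $\P(Y_{n,k}\ge A)\lesssim B_{n,k}^{A}$ and Borel--Cantelli. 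You need to supply this step, or something equivalent, to make (d) complete.
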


\begin{proof}
 (a) is immediate from Theorem~\ref{thm:separation} and \eqref{ex:radial} with $\gamma=M+1$. 
 
 (c) If $a\geq 2$ the series in \eqref{ex:radial} diverges for all $\gamma>1$, thus by Theorem~\ref{thm:Carleson}(a) $\Lambda_{a,b}$ is a.s. not  1-Carleson.
 
 On the other hand, if $a<2$ there exists $\gamma$ such that $(2-a)\gamma-1>0$ (i.e, such that $\gamma>\frac 1{2-a})$. For that $\gamma$ the series in \eqref{ex:radial} converges, and we can conclude again by Theorem~\ref{thm:Carleson}(a).
 
 (d) Suppose first that $a<1+\alpha$. As in the previous case, since $2-a>1-\alpha$ there exists $\gamma\in(\frac 1{2-a},\frac 1{1-\alpha})$. For this $\gamma$ the series in \eqref{ex:radial} converges and we can apply Theorem~\ref{thm:Carleson}(b). 

If $a>1+\alpha$ and $b\in\R$, then  $\Lambda_{\mu(a,b)}$ contains in the mean more points than $\Lambda_{\mu(1+\alpha,1)}$ for which we have shown in Theorem \ref{thm:Carleson}(c) that it is almost surely not $\alpha$-Carleson. 

It thus remains the case $a=1+\alpha$. Again, when $b=1$ --- and thus also when $b<1$ since then we have more points in the mean ---  the proof of Theorem~\ref{thm:Carleson}(c) shows that the corresponding sequence is almost surely not $\alpha$-Carleson.

Finally, suppose that $a=1+\alpha$ and $b>1$. Recall from \eqref{ynk-alpha} the notation
\[ 
 \quad Y_{n,k}=2^{n\alpha}\sum_{m\geq n} 2^{-m\alpha} \sum_{j: T_{m,j}\subset Q_{n,k}} X_{m,j}.
\]
In the proof of Theorem \ref{thm:Carleson}(b) we have shown that
\[
 P(Y_{n,k}\ge A)\le B_{n,k}^A, 
\]
where
\[
 B_{n,k}=\sum_{m\geq n}  2^{-(m-n)\alpha} \sum_{j: T_{m,j}\subset Q_{n,k}} \mu_{m,j}.
\]
From the explicit form of $\mu_{m,j}$ we get
\[
 B_{n,k}\simeq \sum_{m \ge n}2^{-(m-n)\alpha}\times 2^{m-n}\times \frac{2^{-m(2-a)}}{m^b}
 =2^{n(\alpha-1)}\sum_{m\ge n}\frac{2^{-m(\alpha+1-a)}}{m^b}
\]
which converges exactly when $a<1+\alpha$ or when $a=1+\alpha$ and $b>1$ which is the case we are interested in here. In this situation, we get
\[
 B_{n,k}\simeq \frac{2^{-n(2-a)}}{n^b}
\]
Clearly, when $A\ge 1/(2-a)=1/(1-\alpha)$, then $\sum_{n,k}B_{n,k}^A$ converges, and the Borel-Cantelli lemma shows that $Y_{n,k}\ge A$ can happen for an at most finite number of Carleson windows $Q_{n,k}$. Hence $\Lambda_{\mu(a,b)}$ is a.s. $\alpha$-Carleson.
\end{proof}


 \emph{ 2. Measures with a singularity on $\T$}. Define now
  \[
d\sigma(a,b)(z)= \frac{dm(z)}{|1-z|^{a} \log^b\bigl(\frac e{|1-z|}\bigr)},
\]
where either $a>2$, $b\in\R$, or $a=2$ and $b\leq 1$ (so that $\sigma(a,b)(\D)=+\infty$). 
Here
\begin{align*}
 \sigma (a,b)_{n,k}=\sigma (a,b)(T_{n,k})\simeq \frac{2^{-2n}}{[(k+1)2^{-n}]^a \log^b\bigl(\frac e{(k+1)2^{-n}}\bigr)},\quad n\in\N,\ k=0,\ldots, 2^{n-1}.
\end{align*}
Hence for $\gamma>1$, 
\begin{equation}\label{ex:radial1}
 \sum_{n,k} \sigma (a,b)_{n,k}^\gamma\simeq
 \sum_{n} {2^{-n\gamma(2-a)}} \sum_{k=1}^{2^n}\frac 1{\Big(k^a\log^b\bigl(\frac e{k2^{-n}}\bigr)\Big)^{\gamma}}.
\end{equation}

Let us examine the growth of the sum in $k$. For that, set
\[
 S_n(a,b,\gamma)=\sum_{k=1}^{2^n}\frac 1{k^{a\gamma}\log^{b\gamma}\bigl(\frac e{k2^{-n}}\bigr)}
 \simeq \int_1^{2^n}\frac{dx}{x^{a\gamma}\log^{b\gamma}\bigl(\frac e{x2^{-n}}\bigr)}
\]
The change of variable $t=\log\bigl(\frac e{x2^{-n}}\bigr)$ leads to
\[
S_n(a,b,\gamma)\simeq \int_{\log(2^ne)}^1 \left(\frac{e^t}{e2^n}\right)^{a\gamma-1}\frac{-dt}{t^{b\gamma}}
 =\frac{2^{-n(a\gamma-1)}}{e^{a\gamma -1}}\int_{1}^{\log(2^ne)}e^{t(a\gamma-1)}\frac{dt}{t^{b\gamma}}
\]
Our standing assumption being $a>2$ or $a=2$ and $b\le 1$, we only need to consider these two cases. In both cases, $e^{t(a\gamma-1)}/t^{b\gamma}\to +\infty$ when $t\to+\infty$, and the last integral behaves essentially as the value in the upper bound of the integration interval
\[
 \int_{1}^{\log(2^ne)}e^{t(a\gamma-1)}\frac{dt}{t^{b\gamma}}
\simeq \frac{2^{n(a\gamma-1)}}{n^{b\gamma}}.
\]
Hence
\[
 S_n(a,b,\gamma)\simeq \frac{1}{n^{b\gamma}},
\]
and
\begin{equation}\label{ex:radial3}
\sum_{n,k} \sigma (a,b)_{n,k}^\gamma\simeq \sum_n 2^{-n\gamma(2-a)}\times 
 \frac{1}{n^{\gamma b}}=\sum_n \frac{2^{-n\gamma(2-a)}}{n^{\gamma b}}.
\end{equation}
We are now in a position to prove the following result.

\begin{proposition}\label{Prop:Ex2}
Consider the Poisson process $\tilde \Lambda_{a,b}$ associated to the measure $\sigma(a,b)$, with either $a>2$ or $a=2$ and $b\leq 1$.
 \begin{itemize}
  \item [(a)] For $a>2$ the process $\tilde \Lambda_{a,b}$ is a.s.\ neither a finite union of separated sequences nor an $\alpha$-Carleson, for any $\alpha\in (0,1]$.
  
\item [(b)]  For $a=2$ the process $\tilde \Lambda_{2,b}$ is 
\begin{itemize}
 \item [(i)] the union of $M$ separated sequences if and only if $b>\frac 1{M+1}$,
 \item [(ii)]  $\alpha$-Carleson for $\alpha\in (0,1)$ if $b>1-\alpha$.
\end{itemize}
 \end{itemize}
\end{proposition}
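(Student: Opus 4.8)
The plan is to derive everything from the explicit asymptotics \eqref{ex:radial3} together with Theorems~\ref{thm:separation} and~\ref{thm:Carleson}, the only genuinely new ingredient being a direct argument for the negative $\alpha$-Carleson statement in part~(a), where the available theorem supplies only a sufficient condition. Throughout I will use that \eqref{ex:radial3} reduces the relevant series to $\sum_{n,k}\sigma(a,b)_{n,k}^\gamma\simeq\sum_n 2^{-n\gamma(2-a)}n^{-\gamma b}$, so that every claim becomes a convergence question for an explicit one-variable series.

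First I would dispose of part~(b), where $a=2$ and $b\le 1$. Setting $a=2$ collapses the exponential factor in \eqref{ex:radial3} and gives $\sum_{n,k}\sigma(2,b)_{n,k}^\gamma\simeq\sum_n n^{-\gamma b}$, which converges precisely when $\gamma b>1$. For (i) I take $\gamma=M+1$ in Theorem~\ref{thm:separation}: the series converges iff $(M+1)b>1$, i.e. iff $b>\frac1{M+1}$, giving the stated equivalence. For (ii) I invoke the sufficient condition of Theorem~\ref{thm:Carleson}(b), which asks for some $\gamma$ with $1<\gamma<\frac1{1-\alpha}$ and $\sum_{n,k}\sigma(2,b)_{n,k}^\gamma<\infty$, i.e. with $\gamma>\frac1b$. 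Since $b\le 1$ forces $\frac1b\ge 1$, such a $\gamma\in(\frac1b,\frac1{1-\alpha})$ exists exactly when $\frac1b<\frac1{1-\alpha}$, that is when $b>1-\alpha$; this is the claimed sufficient condition.

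For part~(a) I would first settle the statements already covered by the theorems. When $a>2$ the factor $2^{-n\gamma(2-a)}=2^{n\gamma(a-2)}$ grows geometrically, so the series in \eqref{ex:radial3} diverges for every $\gamma>1$. Applying Theorem~\ref{thm:separation} with $\gamma=M+1$ shows $\tilde\Lambda_{a,b}$ is a.s. not a union of $M$ separated sequences; since this holds for each $M$, a.s. it is not a finite union of separated sequences. The divergence for all $\gamma>1$ also lets Theorem~\ref{thm:Carleson}(a) rule out the $1$-Carleson property a.s. The main obstacle is the remaining claim that $\tilde\Lambda_{a,b}$ is a.s. not $\alpha$-Carleson for $\alpha\in(0,1)$, for which Theorem~\ref{thm:Carleson} offers no necessary condition.

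I would overcome this with a direct single-box overflow argument. The point is that for $a>2$ the innermost masses blow up: $\sigma(a,b)_{n,0}\simeq 2^{-2n}(2^{-n})^{-a}(\log 2^n)^{-b}\simeq 2^{n(a-2)}n^{-b}\to\infty$. Hence the independent Poisson variables $X_{n,0}=N_{T_{n,0}}$ satisfy $\P(X_{n,0}\ge N)\to1$ for every fixed $N$, so $\sum_n\P(X_{n,0}\ge N)=\infty$ and the second Borel--Cantelli lemma gives $X_{n,0}\ge N$ infinitely often a.s.; letting $N\to\infty$ yields $\limsup_n X_{n,0}=\infty$ a.s. Now I test the $\alpha$-Carleson condition on the single arc $I_{n,0}$: every $\lambda\in T_{n,0}$ has $1-|\lambda|\simeq 2^{-n}=|I_{n,0}|$, so $\sum_{\lambda\in Q(I_{n,0})}(1-|\lambda|)^\alpha\ge\sum_{\lambda\in T_{n,0}}(1-|\lambda|)^\alpha\simeq X_{n,0}\,|I_{n,0}|^\alpha$, and therefore $\sum_{\lambda\in Q(I_{n,0})}(1-|\lambda|)^\alpha/|I_{n,0}|^\alpha\gtrsim X_{n,0}$. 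Since the right-hand side is a.s. unbounded in $n$, no finite constant $C$ can work, so $\tilde\Lambda_{a,b}$ is a.s. not $\alpha$-Carleson; this reasoning is uniform in $\alpha\in(0,1]$, thereby closing part~(a).
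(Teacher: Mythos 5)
Your proposal is correct, and for part (b) and for the separation and $1$-Carleson claims in part (a) it follows exactly the paper's route (compute \eqref{ex:radial3}, then quote Theorems~\ref{thm:separation} and~\ref{thm:Carleson}). The one place where you diverge is the claim in (a) that $\tilde\Lambda_{a,b}$ is a.s.\ not $\alpha$-Carleson for $\alpha\in(0,1)$. You assert that Theorem~\ref{thm:Carleson} offers no necessary condition there and build a direct single-box overflow argument: $\sigma(a,b)_{n,0}\simeq 2^{n(a-2)}n^{-b}\to\infty$, so by independence and the second Borel--Cantelli lemma $\limsup_n X_{n,0}=\infty$ a.s., which defeats the $\alpha$-Carleson inequality on the arcs $I_{n,0}$. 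That argument is sound (the boxes $T_{n,0}$ are disjoint, so the counting variables are independent, and $(1-|\lambda|)^\alpha\simeq|I_{n,0}|^\alpha$ on $T_{n,0}$). The paper instead treats this as ``immediate'', the implicit point being the elementary monotonicity: if $\sum_{\lambda\in Q(I)}(1-|\lambda|)^\alpha\le C|I|^\alpha$ with $\alpha<1$, then writing $(1-|\lambda|)=(1-|\lambda|)^\alpha(1-|\lambda|)^{1-\alpha}\le(1-|\lambda|)^\alpha|I|^{1-\alpha}$ shows that every $\alpha$-Carleson sequence is $1$-Carleson, so the necessity half of Theorem~\ref{thm:Carleson}(a) (the series diverges for all $\gamma>1$) already excludes the $\alpha$-Carleson property for every $\alpha\in(0,1]$ at once. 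Your route costs an extra Borel--Cantelli argument but is self-contained and quantitative (it exhibits the failing windows); the paper's is a one-line reduction. Both are valid, so there is nothing to fix, though you could shorten your proof by noting the implication $\alpha$-Carleson $\Rightarrow$ $1$-Carleson.
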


\begin{proof}
 (a) is immediate from Theorems~\ref{thm:separation} and ~\ref{thm:Carleson}, since \eqref{ex:radial3} diverges for all $\gamma>0$. 
 
 (b) In this case the series \eqref{ex:radial3} is just $\sum_n 1/n^{b\gamma}$. 

The case (i) follows from Theorem~\ref{thm:separation} with $\gamma=M+1$.

For (ii), by the hypothesis  $1/b<1/(1-\alpha)$, there exists $1/b<\gamma<1/(1-\alpha)$, for which the series \eqref{ex:radial3} converges. We can conclude by Theorem~\ref{thm:Carleson}.
\end{proof}

\emph{ 3. Measures in a cone}. Given a point  $\zeta\in\T$, consider a Stolz region 
 \[
 \Gamma(\zeta)=\bigl\{z\in \D :\frac{|\zeta-z|}{1-|z|}<2\bigr\}.
 \]
We discuss the previous measures restricted to $\Gamma(\zeta)$. With no restriction of generality we can assume that $\zeta=1$. Let thus
 \[
d\tau(a,b)(z)= \chi_{\Gamma(1)}(z) d\mu(a,b)(z)=\chi_{\Gamma(1)}(z)\frac{dm(z)}{(1-|z|^2)^{a} \log^b\bigl(\frac e{1-|z|^2}\bigr)},
\]
where now either $a> 2$, $b\in\R$, or $a=2$ and $b\leq 1$ (so that $\nu(a,b)(\D)=+\infty$). Since in $\Gamma$ the measures $d\mu(a,b)$ and $d\sigma(a,b)$ behave similarly, we could replace $d\mu(a,b)$ by $d\sigma(a,b)$ in the definition of $d\tau(a,b)$.

Observe that $\nu(a,b)_{n,k}$ is non-zero only for a finite number $N$ of $k$ at each level $n$, and that for those $k$
\[
 \tau(a,b)_{n,k}\simeq \frac{2^{-n(2-a)}}{n^b}\qquad n\geq 1,\ k=0, \dots, N.
\]
Hence
\begin{equation}\label{ex:radial2}
 \sum_{n,k} \tau(a,b)_{n,k}^\gamma\simeq \sum_n \frac{2^{-n(2-a)\gamma}}{n^{b\gamma}},
\end{equation}
which is exactly the same estimate as in \eqref{ex:radial3} and thus immediately leads to the same result as Proposition \ref{Prop:Ex2}. This might look surprising since $\sigma(a,b)$ (and a fortiori $\mu(a,b)$) puts infinite mass outside $\Gamma(\zeta)$ (actually outside Stolz angles at $\zeta$ with arbitrary opening).

\begin{proposition} Consider the Poisson process $\hat \Lambda_{a,b}$ associated to the masure $\tau(a,b)$, with either $a>2$ or $a=2$ and $b\leq 1$.
 \begin{itemize}
  \item [(a)] For $a>2$ the process $\hat \Lambda_{a,b}$ is a.s. not a finite union of separated sequences
  separated or $\alpha$-Carleson for any $\alpha\in (0,1]$.
  
\item [(b)]  For $a=2$ the process $\hat \Lambda_{2,b}$ is 
\begin{enumerate}
 \item the union of $M$ separated sequences if and only if $b>\frac 1{M+1}$,
 \item $\alpha$-Carleson for $\alpha\in (0,1)$ if $b>1-\alpha$.
\end{enumerate}
 \end{itemize}
\end{proposition}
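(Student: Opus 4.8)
The plan is to reduce the whole statement to the single discrete estimate \eqref{ex:radial2}, already recorded just above, which reads $\sum_{n,k}\tau(a,b)_{n,k}^\gamma\simeq\sum_n 2^{-n(2-a)\gamma}/n^{b\gamma}$. Since this coincides verbatim with \eqref{ex:radial3}, the convergence/divergence behaviour in $\gamma$ is identical to that for $\sigma(a,b)$, and the conclusions will follow from Theorems~\ref{thm:separation} and \ref{thm:Carleson} exactly as in Proposition~\ref{Prop:Ex2}. So I would first justify \eqref{ex:radial2}: the key geometric observation is that $\Gamma(1)$ meets only a bounded number $N$ of the boxes $T_{n,k}$ at each level $n$, and on those boxes $\tau(a,b)$ agrees, up to constants, with the radial density $d\mu(a,b)$, whose mass on $T_{n,k}$ is $\simeq 2^{-n(2-a)}/n^b$; summing $N$ equal $\gamma$-th powers per level then yields \eqref{ex:radial2}.

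For part (a), with $a>2$ one has $2-a<0$, so the factor $2^{-n(2-a)\gamma}=2^{n(a-2)\gamma}$ blows up and the series in \eqref{ex:radial2} diverges for every $\gamma>0$. I would then apply Theorem~\ref{thm:separation} with $\gamma=M+1$: for each fixed $M$ the divergence of $\sum_{n,k}\tau(a,b)_{n,k}^{M+1}$ gives probability zero of being a union of $M$ separated sequences, and intersecting these countably many almost-sure events rules out every finite union. For the Carleson part, divergence for all $\gamma>1$ lets Theorem~\ref{thm:Carleson}(a) conclude that $\hat\Lambda_{a,b}$ is almost surely not $1$-Carleson. To upgrade this to \emph{not $\alpha$-Carleson for any} $\alpha\in(0,1]$ I would use the elementary nesting $(1-|\lambda|)^{\alpha'}\le |I|^{\alpha'-\alpha}(1-|\lambda|)^{\alpha}$ on $Q(I)$, valid for $\alpha\le\alpha'$, which shows that $\alpha$-Carleson implies $1$-Carleson; the contrapositive gives the claim.

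For part (b), $a=2$ forces $2-a=0$, so \eqref{ex:radial2} collapses to $\sum_n n^{-b\gamma}$, convergent precisely when $\gamma>1/b$. Then (i) is Theorem~\ref{thm:separation} with $\gamma=M+1$: the series converges iff $(M+1)b>1$, i.e.\ $b>1/(M+1)$, giving the stated equivalence. For (ii), the hypothesis $b>1-\alpha$ is exactly $1/b<1/(1-\alpha)$, so I can choose $\gamma\in(1/b,\,1/(1-\alpha))$; since $b\le 1$ forces $1/b\ge 1$ this $\gamma$ automatically exceeds $1$, the series converges for it, and $1<\gamma<1/(1-\alpha)$ is precisely the hypothesis of Theorem~\ref{thm:Carleson}(b), yielding almost-sure $\alpha$-Carleson.

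Once \eqref{ex:radial2} is in hand the rest is bookkeeping, so the only real content---and the main obstacle---is the measure computation behind \eqref{ex:radial2}: verifying that truncating the radial density to the Stolz region $\Gamma(1)$ leaves the per-level mass estimate $\tau(a,b)_{n,k}\simeq 2^{-n(2-a)}/n^b$ unchanged on the $O(1)$ surviving boxes. This is where the mildly surprising phenomenon noted in the text lives: although $\sigma(a,b)$ and $\mu(a,b)$ place infinite mass outside every Stolz angle at $1$, restricting to the cone does not change the answer, because for every $\gamma$ the $\gamma$-sums are governed by the contributions near $\zeta=1$ that the cone retains.
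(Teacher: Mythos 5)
Your proposal is correct and follows essentially the same route as the paper: establish the per-level mass estimate \eqref{ex:radial2} from the fact that $\Gamma(1)$ meets only $O(1)$ boxes $T_{n,k}$ at each level, observe that it coincides with \eqref{ex:radial3}, and then invoke Theorems~\ref{thm:separation} and \ref{thm:Carleson} exactly as in Proposition~\ref{Prop:Ex2}. The one point you make explicit that the paper leaves implicit is worth keeping: in part (a), Theorem~\ref{thm:Carleson}(a) only rules out the $1$-Carleson property, and your nesting inequality $(1-|\lambda|)^{\alpha'}\le |I|^{\alpha'-\alpha}(1-|\lambda|)^{\alpha}$ (showing $\alpha$-Carleson implies $\alpha'$-Carleson for $\alpha'\ge\alpha$) is precisely the bridge needed to exclude all $\alpha\in(0,1]$.
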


\subsection{Integral conditions on $\mu$}
Given a locally finite, $\sigma$-finite measure $\mu$, a natural question is whether the discretized conditions appearing in Theorems \ref{thm:separation} and \ref{thm:Carleson} can be reformulated in terms of integrals. Let us assume that $\mu$ is absolutely continuous with respect to the Lebesgue (or the invariant) measure on $\D$. In view of the aforementioned discrete conditions this is not really restrictive, since in case $\mu$ had a singular part we could just redistribute its mass continuously on each $T_{n,k}$. Assume thus that $d\mu=h\, d\nu$, where $d\nu=\frac{dm(z)}{(1-|z|^2)^2}$ is the invariant measure, $h\geq 0$ and $h\in L^1_{loc }(\D; \nu)$ (but $h\notin L^1(\D; \nu)$, so that $\mu(\D)=\infty$). 

As a result of Jensen's inequality applied to $\nu$ on $T_{n,k}$, we deduce the following general observation.

\begin{proposition}
Let $\mu=h\, d\nu$, where $h\geq 0$ and $h\in L^1_{loc }(\D)$. For every $\gamma>1$ there exists $C>0$ such that
\begin{equation}\label{intcond}
 \sum_{n,k}\mu_{n,k}^{\gamma}\leq C \int_{\D} h^{\gamma}(z)\, d\nu(z).
\end{equation}
\end{proposition}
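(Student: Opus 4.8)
The plan is to prove the pointwise Jensen inequality on each box $T_{n,k}$ and then sum. Fix $n,k$ and write $\nu_{n,k}=\nu(T_{n,k})$, which is comparable to a constant $c_0$ independent of $n,k$ (this is the standard fact that the invariant measure of a hyperbolic box of fixed hyperbolic size is bounded above and below; it follows from $d\nu=dm/(1-|z|^2)^2$ and the geometry of $T_{n,k}$). The normalized measure $d\nu/\nu_{n,k}$ is then a probability measure on $T_{n,k}$, so Jensen's inequality applied to the convex function $t\mapsto t^\gamma$ (valid since $\gamma>1$) gives
\[
 \Bigl(\frac{1}{\nu_{n,k}}\int_{T_{n,k}} h\, d\nu\Bigr)^{\gamma}
 \leq \frac{1}{\nu_{n,k}}\int_{T_{n,k}} h^{\gamma}\, d\nu.
\]

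Next I would rewrite this in terms of $\mu_{n,k}=\mu(T_{n,k})=\int_{T_{n,k}} h\, d\nu$. The left-hand side is $\nu_{n,k}^{-\gamma}\mu_{n,k}^{\gamma}$, so multiplying through by $\nu_{n,k}$ yields
\[
 \mu_{n,k}^{\gamma}\leq \nu_{n,k}^{\,\gamma-1}\int_{T_{n,k}} h^{\gamma}\, d\nu
 \leq C \int_{T_{n,k}} h^{\gamma}\, d\nu,
\]
where $C=\sup_{n,k}\nu_{n,k}^{\,\gamma-1}<\infty$ by the uniform upper bound on $\nu_{n,k}$ (this is where $\gamma>1$ is used a second time, to ensure the exponent $\gamma-1$ is positive so that the bounded quantity $\nu_{n,k}$ is raised to a nonnegative power and stays bounded).

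Finally I would sum over all $n,k$. Since the boxes $T_{n,k}$ are pairwise disjoint and their union is $\D$ (up to a $\nu$-null set), additivity of the integral gives
\[
 \sum_{n,k}\mu_{n,k}^{\gamma}
 \leq C\sum_{n,k}\int_{T_{n,k}} h^{\gamma}\, d\nu
 = C\int_{\D} h^{\gamma}\, d\nu,
\]
which is exactly \eqref{intcond}. The argument is short and the only genuine ingredient is the two-sided comparability $\nu_{n,k}\simeq c_0$; the direction actually needed here is merely the upper bound $\nu_{n,k}\leq C'$, so no lower estimate is required. I do not expect any serious obstacle: the main point to state carefully is that $\gamma>1$ is used both for the convexity in Jensen's step and to keep $\nu_{n,k}^{\gamma-1}$ controlled by a finite constant. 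One should also note that the reverse inequality will generally fail, since passing from averages to pointwise values via Jensen is one-directional; this explains why the proposition gives only a sufficient integral condition for the convergence of $\sum_{n,k}\mu_{n,k}^{\gamma}$ rather than an equivalence.
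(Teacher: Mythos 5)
Your proof is correct and is precisely the argument the paper intends: the paper's entire justification is the remark that the inequality follows from Jensen's inequality applied to $\nu$ on each $T_{n,k}$, which is exactly your normalization of $\nu$ on the box, convexity of $t\mapsto t^{\gamma}$, the uniform bound $\nu(T_{n,k})\simeq 1$, and summation over the disjoint boxes. No gaps; your additional remarks about where $\gamma>1$ is used and why the converse fails are accurate.
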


Of course without additional conditions on $h$ the conditions on the sum and on the integral cannot be equivalent, and there are standard construction methods to find measures for which the sum is convergent while the integral diverges. We do not go into details of that here. 

One is obviously more interested in situations where the sum and the integral conditions are equivalent. This is for instance the case when 
$h$ is radial with some regularity conditions (as in the first class of examples in the previous section) or when $h$ is
subharmonic  (as in the second class of examples).

Another mildly regular case in which an equivalent reformulation in terms of integrals is possible is when $\mu$ is doubling, meaning that there exists $C>0$ such that $\mu(2D)\leq C \mu(D)$ for all open disc $D\subset\D$. Here $2D$ denotes the disk with the same center as $D$ but with double radious (in the pseudohyperbolic metric). Fixing any $c\in (0,1)$ and defining $F_\mu(z)=\mu(D(z,c))$ one immediately sees that for any $\gamma>1$
\[
 \sum_{n,k}\mu_{n,k}^\gamma \simeq \int_{\D} F_\mu^\gamma (z)\, d\nu(z).
\]


\begin{thebibliography}{EKMR}

\bibitem{AHMR}
Aleman, A.;  Hartz, M.; McCarthy, J.; Richter, S.,
\emph{Interpolating Sequences in Spaces with the Complete Pick Property},
Int. Math. Res. Not. 2019, no. 12, 3832--3854.


\bibitem{Ap} Aparicio Monforte, A. \emph{Successions aleat\`ories a $\mathbb B_n\subset\mathbb C^n$}.  Master's thesis. Universitat de Barcelona (2005).

\bibitem{Am-Bo} Amar, \'E. ; Bonami, A. \emph{Mesures de Carleson d'ordre $\alpha$ et solutions au bord de l'\'equation $\bar\partial_b$}.  Bull. Soc. Math. France 107 (1979), no. 1, 23--48.

\bibitem{Bil} 
Billingsley, P. \emph{Probability and measure}. Wiley, New York, 1979.


\bibitem{BN}
B\o e, B. ; Nicolau, A. \emph{Interpolation by functions in the Bloch space}. J. Anal. Math. 94 (2004), 171--194.

\bibitem{Bog}
Bogdan, K. \emph{On the zeros of functions with finite Dirichlet integral}. Kodai Math. J. 19 (1996), no. 1, 7--16.

\bibitem{Bo}
Bomash, G.; \emph{A Blaschke-type product and random zero sets for Bergman spaces}. Ark. Mat. 30 (1992), no. 1, 45--60. 


\bibitem{Ca}
Carleson, L; \emph{An interpolation problem for bounded analytic functions}. Amer. J. Math. 80 (1958), 921--930.

\bibitem{CHKW}
Chalmoukis, N. ; Hartmann, A.;  Kellay, K. ; Wick, B.D. \emph{Random Interpolating Sequences in Dirichlet Spaces}.  
Int. Math. Res. Not. IMRN (2021), rnab110, https://doi-org.sire.ub.edu/10.1093/imrn/rnab110

\bibitem{Coc}
 Cochran, W. G.; \emph{Random Blaschke products}. Trans. Amer. Math. Soc. 322 (1990), no. 2, 731--755.

\bibitem{Coh}
Cohn,W.S. ; \emph{Interpolation and multipliers on Besov and Sobolev spaces}. 
Complex Variables Theory Appl. 22 (1993), no. 1-2, 35--45.

\bibitem{Da-Ve}
Daley, D. J.; Vere-Jones, D. \emph{An introduction to the theory of point processes}. Vol. I. Elementary theory and methods. Second edition. Probability and its Applications (New York). Springer--Verlag, New York, 2003. 

\bibitem{DWW}
Dayan, A.; Wick, B.D.; Wu, Sh.;
\emph{Random Interpolating Sequences in the Polydisc and the Unit Ball},
preprint, http://arxiv.org/abs/2012.05381v4


\bibitem{Du}
Duren, P. L. ; \emph{Theory of $H^p$ spaces}. Pure and Applied Mathematics, Vol. 38 Academic Press, New York--London 1970.

\bibitem{EKMR}
El Fallah, O.;  Kellay, K.;  Mashreghi, J. ; Ransford, T. \emph{One-box conditions for Carleson measures for the Dirichlet space}. 
Proc. Amer. Math. Soc. 143 (2015), no. 2, 679--684.

\bibitem{Ga}
Garnett, J.B.; \emph{Bounded analytic functions}. Pure and Applied Mathematics, 96. Academic Press, Inc. [Harcourt Brace Jovanovich, Publishers], New York-London, 1981. xvi+467 pp.

\bibitem{HKZ}
Hedenmalm, H.; Korenblum, B.; Zhu, K.; \emph{Theory of Bergman spaces}. Graduate Texts in Mathematics, 199. Springer-Verlag, New York, 2000.

\bibitem{HKPV}
Hough, J. B. ; Krishnapur, M. ; Peres, Y. ; Vir{\'a}g, B. 
\emph{Zeros of Gaussian analytic functions and determinantal point
processes}.  University Lecture Series, 51.  American Mathematical Society,  Providence, RI,
2009.

\bibitem{La-Pe}
Last, G.; Penrose, M.; 
\emph{Lectures on the Poisson process.}
Institute of Mathematical Statistics Textbooks, 7. Cambridge University Press, Cambridge, 2018.

\bibitem{McDS}
McDonald, G.; Sundberg, C. \emph{Toeplitz operators on the disc}. Indiana Univ. Math. J. 28 (1979), no. 4, 595–611

\bibitem{RuBall}
Rudin, W.;
\emph{Function theory in the unit ball of $\mathbb{C}^n$}. 
Grundlehren der Mathematischen Wissenschaften [Fundamental Principles of Mathematical Sciences], 241. Springer-Verlag, New York-Berlin, 1980. xiii+436 pp.

\bibitem{Ru}
Rudowicz, R.; 
\emph{Random sequences interpolating with probability one}. Bull. London Math. Soc. 26 (1994), no. 2, 160--164. 

\bibitem{Se2}
Seip, K.; \emph{Beurling type density theorems in the unit disk}. Invent. Math. 113 (1993), no. 1, 21--39.

\bibitem{S}
Seip, K. \emph{Interpolation and Sampling in Spaces of Analytic Functions}.
University Lecture Series, vol. 33. American Mathematical Society, 2004.

\bibitem{St} 
\textit{D.~Stegenga}, Multipliers of the Dirichlet space. Illinois J. Math. \textbf{24} (1980), no. 1, 113--139.

\bibitem{Wi}
Wikipedia contributors, \emph{Poisson point process}, Wikipedia, The Free Encyclopedia, 19 April 2022 19:38 UTC,
$\langle$\url{https://en.wikipedia.org/w/index.php?title=Poisson_point_process&oldid=1083617092}$\rangle$ [accessed 19 May 2022]

\end{thebibliography}
\end{document}